\newcounter{dummy}
\newcommand\myitem[1][]{\item\refstepcounter{dummy}\def\@currentlabel{#1}}
\theoremstyle{definition}
\newtheorem{theorem}{Theorem}[section]
\newtheorem{definition}[theorem]{Definition}
\newtheorem{lemma}[theorem]{Lemma}
\newtheorem{corollary}[theorem]{Corollary}
\newtheorem{proposition}[theorem]{Proposition}
\newcommand{\la}{\langle}
\newcommand{\ra}{\rangle}
\newcommand{\glp}{{\mathsf{GLP}}}
\newcommand{\gl}{\mathsf{GL}}
\newcommand{\Worms}{{\mathbb W}}
\newcommand{\RC}{{\sf RC}}
\newcommand{\RCzero}{{{\sf RC}^0}}
\newcommand{\WC}{{\sf WC}}
\newcommand{\vdashr}{\vdash_{\RC}}
\newcommand{\vdashw}{\vdash_{\WC}}
\newcommand{\len}[1]{|#1|}
\title{The Worm Calculus}
\author{Ana de Almeida Borges\thanks{\url{ana.agvb@gmail.com}} \and Joost J. Joosten\thanks{\url{jjoosten@ub.edu}}}
\date{August 2018}
\begin{document}

\maketitle

  \begin{abstract}
	We present a propositional modal logic $\WC$, which includes a logical \emph{verum} constant $\top$ but does not have any propositional variables. Furthermore, the only connectives in the language of $\WC$ are consistency-operators $\la \alpha \ra$ for each ordinal $\alpha$. As such, we end up with a class-size logic. However, for all practical purposes, we can consider restrictions of $\WC$ up to a given ordinal. Given the restrictive signature of the language, the only formulas are iterated consistency statements, which are called worms. The theorems of $\WC$ are all of the form $A \vdash B$ for worms $A$ and $B$.
	The main result of the paper says that the well-known strictly positive logic $\RC$, called Reflection Calculus, is a conservative extension of $\WC$. As such, our result is important since it is the ultimate step in stripping spurious complexity off the polymodal provability logic $\sf{GLP}$, as far as applications to ordinal analyses are concerned. Indeed, it may come as a surprise that a logic as weak as $\WC$ serves the purpose of computing something as technically involved as the proof theoretical ordinals of formal mathematical theories. 
  \end{abstract}

  \textbf{Keywords:}
	Provability logic,
	strictly positive logics,
	closed fragment,
	feasible fragments,
	Reflection Calculus,
	ordinal notations.

\section{Introduction}

Quite some interest has arisen in feasible fragments of modal logics recently. One of the common goals is to find fragments with good computational properties that still maintain a decent amount of expressibility. Description logics and their applications to database theory \cite{Artale2009} are a good example of this. 

The current paper also studies fragments of modal logic, but coming from a different tradition. Our starting point is $\glp$: a polymodal version of G\"odel-L\"ob's provability logic as introduced by Japaridze \cite{Japaridze1988}. The logic $\glp$ is a propositional modal logic which in its simplest version has a modality for each natural number. Although this logic is known to be $\sf PSPACE$-complete \cite{Shapirovsky2008}, it behaves rather ghastly. While complete with respect to topological semantics \cite{BeklemishevGabelaia2013}, $\glp$ is easily seen to be frame-incomplete.

The logic $\glp$ has received a substantial amount of interest due to its applications to ordinal analysis \cite{Beklemishev2004}. The variable-free fragment $\glp^0$ of $\glp$ actually suffices for various purposes. Going from $\glp$ to $\glp^0$ is then a first weakening leading up to our final system $\WC$ to be introduced below. 

The reason why $\glp^0$ is still suitably expressible lies in the fact that terms in it can be read in various ways. One can conceive of these terms as consistency statements or reflection principles. Furthermore, natural fragments of arithmetic are denoted by terms. The simplest terms of $\glp^0$ are iterated consistency statements, and they are called \emph{worms} due to their relation to the heroic worm battle \cite{Beklemishev2006_WormPrinciple}.
The worms modulo provable equivalence can be ordered, so that they can also be conceived of as ordinals \cite{FernandezDuqueJJJ2014}. Apart from their interpretation as consistency statements, reflection principles, fragments of arithmetic, or ordinals, worms also stand in an intimate relation with Turing progressions \cite{Joosten2016}. All of these mathematical entities can be manipulated and reasoned about within the rather simple modal logic $\glp^0$.

Even though the logic $\glp^0$ is already a substantial simplification with respect to $\glp$, its decidability problem is still ${\sf PSPACE}$-complete \cite{Pakhomov2014}.
Furthermore, the problem of frame incompleteness is still there, but the logic $\glp^0$ does have a rather well behaved universal model \cite{BeklemishevJJJVervoort2005}.

A next step in simplifying $\glp^0$ arose by studying strictly positive fragments of $\glp$ and $\glp^0$ by means of the so called \emph{reflection calculi} $\RC$ and $\RCzero$ \cite{Dashkov2012}, \cite{Beklemishev2012}, \cite{Beklemishev2014}. The theorems of $\RC$ and $\RCzero$ are of the form $\varphi \vdash \psi$, where the only connectives in $\varphi$ and $\psi$ are conjunctions and consistency modalities. $\glp$ is conservative over $\RC$, in the sense that for $\varphi$ and $\psi$ only using conjunctions and consistency operators, we have that $\varphi \vdash \psi$ is provable in $\RC$ if and only if $\varphi \to \psi$ is a theorem of $\glp$ \cite{Dashkov2012}.

The reflection calculi are known to be very well-behaved. In particular, the problem of frame-incompleteness is no longer there, and the decision problem is decidable in polynomial time \cite{Dashkov2012}. Yet, as far as applications to ordinal analysis are concerned, no essential expressive power has been lost. Thus, the second step in our simplification brings us from $\glp^0$ to $\RCzero$.

Given the limited signature of $\RCzero$, its formulas are just built from diamonds, conjunctions, and top. However, it is provable in $\RCzero$ that each of its formulas is equivalent to a single worm \cite{Ignatiev1993}. As such, one may wonder if some decent axiomatization of the worm fragment of $\RCzero$ exists that only uses worms and only proves statements of the form $A\vdash B$ with $A$ and $B$ being worms. The current paper settles this question in the positive, presenting a calculus $\WC$ that only manipulates worms, so that $\RC^0$, and thus also $\glp^0$, are conservative extensions of $\WC$.

In the last two sections of the paper we dwell on semantics for $\WC$. In particular we see that although $\WC$ has the finite model property, any (moderately nice) universal model for $\WC$ inherits much of the intrinsic complexity of Ignatiev's universal model for $\glp^0$.

\section{The Reflection Calculus}

Given an ordinal $\Lambda$, the \emph{Reflection Calculus} for $\Lambda$ --- we write $\RC_\Lambda$ --- is a propositional sequent logic in a modal language that is strictly positive. The language is hence composed of $\top$, variables, and closed both under the binary connective $\land$, and the unary modal operators $\la \alpha \ra$ for each ordinal $\alpha < \Lambda$.

\begin{definition}[Reflection Calculus, $\RC_\Lambda$, \cite{Beklemishev2012}]
Let $\varphi, \psi$ and $\chi$ be formulas in the language of $\RC_\Lambda$, and $\alpha, \beta < \Lambda$ be ordinals.

The axioms of $\RC_\Lambda$ are:
\begin{enumerate}[label = \arabic*.]
\item
$\varphi \vdashr \varphi$ and $\varphi \vdashr \top$;

\item
$\varphi \wedge \psi \vdashr \varphi$ and $\varphi \wedge \psi \vdashr \psi$;

\item
$\la \alpha \ra \la \alpha \ra \varphi \vdashr  \la \alpha \ra \varphi$;

\item
$ \la \alpha \ra \varphi \vdashr  \la \beta \ra \varphi$ for $\alpha > \beta$;

\myitem[5] $ \la \alpha \ra \varphi \wedge \la \beta \ra \psi \vdashr  \la \alpha \ra \big ( \varphi \wedge \la \beta \ra \psi \big )$ for $\alpha > \beta$. \label{RC_Axiom5}
\end{enumerate}
The rules are:
\begin{enumerate}[label = \arabic*.]
\item
If $\varphi \vdashr \psi$ and $\psi \vdashr \chi$, then $\varphi \vdashr \chi$;

\item
If $\varphi \vdashr \psi$ and $\varphi \vdashr \chi$, then $\varphi \vdashr \psi \wedge \chi$;

\item
If $\varphi \vdashr \psi$, then $\la \alpha \ra \varphi \vdashr \la \alpha \ra \psi$.
\end{enumerate}

If $\varphi  \vdashr \psi$, we say that $\psi$ follows from $\varphi$ in $\RC$. If both $\varphi \vdashr \psi$ and $\psi \vdashr \varphi$, we say that $\varphi$ and $\psi$ are equivalent in $\RC$, and write $\varphi \equiv_\RC \psi$.
\end{definition}

In this paper we are mainly interested in the closed fragment of $\RC_\Lambda$, denoted by $\RC^0_\Lambda$, which is the same as $\RC_\Lambda$ without variables in the language. Since the following results hold for any chosen $\Lambda$, we will omit it.

There are some inhabitants of $\RCzero$ on which we take special interest: the worms. These are just the formulas of $\RCzero$ that have no $\land$.

\begin{definition}[Worms, $\Worms$ and $\Worms_\alpha$]
	Worms are inductively defined as follows: $\top$ is in $\Worms$; if $A$ is in $\Worms$ and $\alpha$ is an ordinal, then $\la \alpha \ra A$ is in $\Worms$.
	
	Worms whose modalities are all at least $\alpha$ --- we write $\Worms_\alpha$ --- are defined inductively in a similar manner: $\top$ is in $\Worms_\alpha$; if $A$ is in $\Worms_\alpha$ and $\gamma \geq \alpha$ is an ordinal, then $\la \gamma \ra A$ is in $\Worms_\alpha$.
\end{definition}

It is a known result \cite{Ignatiev1993} that any formula in the language of $\RCzero$ is equivalent to a worm.

\begin{lemma}\label{theorem:eachClosedFormulaIsAworm}
For each formula $\varphi$ of $\RCzero$ there is a worm $A$ such that
	$
		\varphi \equiv_\RC A.
	$
\end{lemma}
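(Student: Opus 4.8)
The plan is to argue by induction on $\len{\varphi}$, the number of modalities occurring in $\varphi$ (conjunctions contributing nothing), so that the inductive hypothesis applies to every strictly shorter formula and, crucially, to the immediate subformulas of $\varphi$. The base case $\varphi = \top$ is immediate since $\top \in \Worms$. If $\varphi = \la\alpha\ra\psi$, then by the inductive hypothesis $\psi \equiv_\RC A$ for some worm $A$; applying rule 3 in both directions gives $\la\alpha\ra\psi \equiv_\RC \la\alpha\ra A$, and $\la\alpha\ra A \in \Worms$ by definition. The only substantive case is $\varphi = \psi\land\chi$: the hypothesis yields worms $A,B$ with $\psi\equiv_\RC A$ and $\chi\equiv_\RC B$, so $\varphi \equiv_\RC A\land B$, and it remains to show that the conjunction of two worms is equivalent to a single worm.

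I would isolate this as a separate claim and prove it by a secondary induction on $\len{A} + \len{B}$. If either worm is $\top$ the claim is trivial, using that $C\land\top \equiv_\RC C$ (axiom 2 one way; rules 1--2 with axiom 1 the other). Otherwise write $A = \la\alpha\ra A'$ and $B = \la\beta\ra B'$ and assume without loss of generality $\alpha \geq \beta$. When $\alpha > \beta$, axiom 5 gives $A\land B \vdashr \la\alpha\ra(A'\land\la\beta\ra B')$, and the converse $\la\alpha\ra(A'\land\la\beta\ra B') \vdashr A\land B$ follows by combining, via rule 2, the two consequences $\la\alpha\ra(A'\land\la\beta\ra B')\vdashr\la\alpha\ra A'$ (from axiom 2 and rule 3) and $\la\alpha\ra(A'\land\la\beta\ra B')\vdashr\la\beta\ra B'$ (from axiom 2 and rule 3, then axioms 4 and 3 under $\alpha>\beta$). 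Thus $A\land B \equiv_\RC \la\alpha\ra(A'\land B)$; since $A'\land B$ is shorter, the secondary hypothesis makes it equivalent to a worm $C$, and rule 3 turns this into $A\land B\equiv_\RC\la\alpha\ra C \in \Worms$.

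I expect the equal-head case $\alpha = \beta$ to be the main obstacle, since axiom 5 requires a strict inequality and $\la\alpha\ra A'\land\la\alpha\ra B'$ does not in general reduce to $\la\alpha\ra(A'\land\la\alpha\ra B')$. The natural route is to show that the tails $A'$ and $B'$ are $\vdashr$-comparable: if, say, $A'\vdashr B'$, then $\la\alpha\ra A'\vdashr\la\alpha\ra B'$ by rule 3, whence $A\land B\equiv_\RC\la\alpha\ra A'=A$, already a worm. To secure comparability I would strengthen the induction so as to establish simultaneously that any two worms are linearly preordered by $\vdashr$; the presence of the definition of $\Worms_\alpha$ points to the cleanest bookkeeping here, namely decomposing each worm along its least occurring modality into blocks lying in $\Worms_{\alpha+1}$ and comparing these blocks recursively. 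This linearity of worms is the genuine content of the lemma --- the rest is routine manipulation with the axioms and rules above.
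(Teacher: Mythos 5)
Your reduction of the lemma to the claim that the conjunction of two worms is $\RC$-equivalent to a single worm is the right move (the paper itself gives no proof, importing the result from Ignatiev, but this is indeed how it is established in the literature), and your treatment of the case $\alpha>\beta$ via Axiom \ref{RC_Axiom5} is correct. The gap is in the equal-head case, and it is not one you can close by ``strengthening the induction'': the statement you propose to prove --- that any two worms are linearly preordered by $\vdashr$ --- is false, and so is the weaker instance you actually need, namely that the tails $A'$ and $B'$ (or $\la\alpha\ra A'$ and $\la\alpha\ra B'$) are comparable when the leading modalities agree. Concretely, take $A=\la 1\ra\la 0\ra\la 2\ra\top$ and $B=\la 1\ra\la 1\ra\top$. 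In Ignatiev's model one computes that $A$ holds at $x$ iff $x_0>\omega^\omega$ and $x_1\geq 1$, while $B$ holds iff $x_1\geq 2$; the point $\la \omega^\omega+\omega,1,0,\dots\ra$ satisfies $A$ but not $B$, and $\la\omega^2,2,0,\dots\ra$ satisfies $B$ but not $A$, so neither $A\vdashr B$ nor $B\vdashr A$. Moreover $A\land B\equiv_\RC\la 1\ra\la 1\ra\la 0\ra\la 2\ra\top$, which derives both conjuncts but is derived by neither; the conjunction is genuinely a third worm, so the ``keep the larger tail'' step has nothing to latch onto.

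What is true --- and what the standard proof (Lemma 9 of \cite{Beklemishev2005}, Corollary 4.13 of \cite{BeklemishevFernandezDuqueJJJ2014}, both invoked later in this paper in the proof of Lemma \ref{lemma:WCmodelHaveSomeSortOfConjunction}) actually relies on --- is comparability only for worms of the special form $\la\alpha\ra C$ and $\la\alpha\ra D$ with $C,D\in\Worms_\alpha$; this follows from trichotomy for $<_\alpha$ (Lemma \ref{lemma:trichotomy}) together with Necessitation and Transitivity, and trichotomy is itself a nontrivial induction, not routine bookkeeping. Accordingly the decomposition must be organized around the \emph{minimal} modality $\alpha$ of $AB$ rather than the leading one: writing $A\equiv h_{\alpha+1}(A)\,\alpha\,b_\alpha(A)$ and likewise for $B$, one \emph{conjoins} the two $(\alpha+1)$-heads by a recursion on the number of distinct modalities (they contain no $\alpha$), and takes the \emph{maximum} of $\alpha b_\alpha(A)$ and $\alpha b_\alpha(B)$, which is the only place a comparison is legitimate. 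Your closing sentence gestures at this decomposition, but it conflates the two roles: heads are conjoined, bodies are compared. (A minor separate issue: your primary induction measure does not decrease when passing from $\psi\land\chi$ to $\psi$ if $\chi$ contains no modalities; induct on the total number of symbols instead.)
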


This makes one wonder whether it would be possible to work with a calculus that only involves worms as far as $\RCzero$ is concerned. This paper settles the question in the positive.

\section{The Worm Calculus}

We propose a \emph{Worm Calculus} --- we write $\WC$ --- which derives sequents of worms.
Since the language of $\WC$ only includes $\top$ and diamonds $\la \alpha \ra$ for an ordinal $\alpha$, we omit the $\la \cdot \ra$, obtaining formulas which are simply strings of ordinals ending in $\top$. To further simplify, for the worms $A\top$ and $B\top$, we will write $A$ and $B$. When we write $AB$ this is understood as $AB\top$.

\begin{definition}[Worm Calculus, $\WC$]
Let $A, B$ and $C$ be worms, and $\alpha, \beta$ be ordinals.

The axioms of $\WC$ are:
\begin{enumerate}[label = A\arabic*.]
\item $A \vdashw \top$;

\myitem[Transitivity] $\alpha \alpha A \vdashw \alpha A$ (Transitivity); \label{Transitivity}

\myitem[Monotonicity] $\alpha A \vdashw \beta A$ for $\alpha > \beta$ (Monotonicity). \label{Monotonicity}
\end{enumerate}

The rules of $\WC$ are:
\begin{enumerate}[label = R\arabic*.]
\item If $A \vdashw B$ and $B \vdashw C$, then $A \vdashw C$ (Cut);

\myitem[Necessitation] If $A \vdashw B$, then $\alpha A \vdashw \alpha B$ (Necessitation); \label{Necessitation}

\myitem[R3] If $A \vdashw B$ and $A \vdashw \alpha C$, then $A \vdashw B \alpha C$, for $B \in \Worms_{\alpha+1}$. \label{Rule3} 
\end{enumerate}

If $A \vdashw B$, we say that $B$ follows from $A$ in $\WC$. If both $A \vdashw B$ and $B \vdashw A$, we say that $A$ and $B$ are equivalent in $\WC$, and write $A \equiv_\WC B$.
\end{definition}

To express recursion and the notion of simplicity, we use a simple measure on worms, their length. The length of a worm is the total number of symbols other than $\top$.

\begin{definition}[Length]
The \emph{length} of a worm $A$ --- we write $\len{A}$ --- is defined recursively as such: $\len{\top} := 0$, and $\len{\alpha A} := \len{A} + 1$.
\end{definition}

We can immediately prove some facts about worms using the worm calculus.

\begin{lemma}\label{lemma:WormImpliesFirstPartOfIt}
For any worms $A$ and $B$, we have that $AB \vdashw A$.
\end{lemma}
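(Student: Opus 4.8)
The plan is to prove $AB \vdashw A$ by induction on the structure of the worm $A$ (equivalently, on its length $\len{A}$), keeping $B$ fixed and arbitrary. Recall that under the conventions adopted for $\WC$, a worm is just a string of ordinals terminating in $\top$, concatenation $AB$ is string concatenation, and $\top$ behaves as the empty string, so that $\top B$ is literally the worm $B$.

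For the base case $A = \top$, the concatenation $AB$ is simply $B$, and the desired sequent $B \vdashw \top$ is exactly the first axiom of $\WC$, namely $A \vdashw \top$ instantiated at $B$.

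For the inductive step, write $A = \alpha A'$ with $\alpha$ an ordinal and $A'$ a worm with $\len{A'} < \len{A}$. The induction hypothesis, applied to $A'$ and the same $B$, gives $A'B \vdashw A'$. Applying the Necessitation rule (\ref{Necessitation}) with modality $\alpha$ then yields $\alpha A' B \vdashw \alpha A'$. Since $\alpha A' B$ is the worm $AB$ and $\alpha A'$ is the worm $A$, this is precisely $AB \vdashw A$, completing the induction.

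I expect no serious obstacle here: the substantive work is carried entirely by Necessitation, with the first axiom supplying the base case, and neither \ref{Monotonicity} nor \ref{Transitivity} is needed. The only point demanding a little care is the bookkeeping of the string/concatenation conventions --- in particular, verifying that $\alpha(A'B)$ and $(\alpha A')B$ denote the same worm, so that Necessitation applies to the inductive hypothesis verbatim.
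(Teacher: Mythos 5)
Your proof is correct and follows exactly the paper's argument: induction on $\len{A}$ with base case $B \vdashw \top$ from the first axiom, and Necessitation building $A$ up front in the inductive step. The extra care you take with the concatenation conventions is sound but not a departure from the paper's approach.
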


\begin{proof}
The proof goes by induction on the length of $A$. Starting from $B\vdash \top$ (base case), repeatedly apply \ref{Necessitation} to build $A$ up front.
\end{proof}

From this lemma we obtain a simple but useful corollary.

\begin{corollary}\label{lemma:ApA}
For any worm $A$, we have that $A\vdashw A$.
\end{corollary}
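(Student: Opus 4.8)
The plan is to obtain this corollary as an immediate instantiation of Lemma~\ref{lemma:WormImpliesFirstPartOfIt}. That lemma guarantees $AB \vdashw A$ for \emph{arbitrary} worms $A$ and $B$, so the natural move is to set $B := \top$. Recalling the notational convention that $A\top$ is abbreviated to $A$, the concatenation $AB$ collapses to $A$ itself when $B = \top$, and hence the lemma's conclusion $AB \vdashw A$ reads exactly as $A \vdashw A$, which is what we want.

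Before invoking the lemma I would check that this instantiation is legitimate. The lemma is proved by induction on $\len{A}$ and places no restriction whatsoever on the second worm, so $B = \top$ is a permitted value. I would also confirm that the bookkeeping around the trailing $\top$ lines up correctly, i.e.\ that $A\top$ and $A$ genuinely denote the same worm, so that the specialization yields $A \vdashw A$ rather than a statement about some proper extension of $A$. This degenerate case (the empty suffix, with the suppressed $\top$) is the only point requiring any care, and it is purely a matter of conventions rather than of proof content.

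There is essentially no mathematical obstacle to overcome here. If one preferred a self-contained derivation, one could rerun the same induction used for the lemma: start from Axiom~A1 in the form $\top \vdashw \top$ and apply \ref{Necessitation} a total of $\len{A}$ times to prepend the successive modalities of $A$. But routing through Lemma~\ref{lemma:WormImpliesFirstPartOfIt} is cleaner, so that is the route I would take.
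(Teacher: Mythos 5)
Your proposal is correct and matches the paper's intent exactly: the corollary is stated as an immediate consequence of Lemma~\ref{lemma:WormImpliesFirstPartOfIt} obtained by taking $B = \top$, which is precisely your instantiation. The care you take with the suppressed trailing $\top$ is the only subtlety, and you handle it correctly.
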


It is in general not true that $A B \vdashw B$, but there is a special case.

\begin{lemma}\label{lemma:WormImplesSecondPartOfItSometimes}
	For any ordinal $\alpha$ and worms $A$ and $B$ such that $A \in \Worms_{\alpha + 1}$, we have that
	$
		A \alpha B \vdashw \alpha B
	$.
\end{lemma}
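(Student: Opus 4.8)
The plan is to argue by induction on the length $\len{A}$ of the worm $A$, keeping the ordinal $\alpha$ and the worm $B$ fixed. In the base case $\len{A} = 0$ we have $A = \top$, so that $A\alpha B$ is literally $\alpha B$, and the desired sequent $\alpha B \vdashw \alpha B$ is just reflexivity, i.e.\ Corollary~\ref{lemma:ApA}. So the whole content lives in the inductive step.

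For the inductive step, write $A = \gamma A'$. Since $A \in \Worms_{\alpha+1}$, by definition of $\Worms_{\alpha+1}$ every modality of $A$ is at least $\alpha+1$; in particular $\gamma \geq \alpha + 1 > \alpha$, and moreover the tail $A'$ is itself in $\Worms_{\alpha+1}$ with $\len{A'} = \len{A} - 1$. This is exactly what is needed to apply the induction hypothesis to $A'$, which gives $A'\alpha B \vdashw \alpha B$. I would then chain together three structural moves, in this order. First, \ref{Necessitation} applied to the induction hypothesis yields $\alpha A' \alpha B \vdashw \alpha \alpha B$. Second, the \ref{Transitivity} axiom gives $\alpha\alpha B \vdashw \alpha B$, so by Cut (R1) we obtain $\alpha A' \alpha B \vdashw \alpha B$. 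Third, since $\gamma > \alpha$, \ref{Monotonicity} gives $\gamma A'\alpha B \vdashw \alpha A'\alpha B$ (reading the monotonicity axiom $\gamma X \vdashw \alpha X$ with $X = A'\alpha B$). One final application of Cut composes this with the previous sequent to conclude $\gamma A' \alpha B \vdashw \alpha B$, which is precisely $A\alpha B \vdashw \alpha B$.

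The step I expect to require the most care is bookkeeping on the side condition rather than any real difficulty: one must check that $\Worms_{\alpha+1}$ is closed under taking the tail $A'$, so that the induction hypothesis genuinely applies, and that $\gamma > \alpha$ (not merely $\gamma \geq \alpha$) to license the use of \ref{Monotonicity}. Both follow immediately from $A \in \Worms_{\alpha+1}$, so the obstacle is only notational. It is worth remarking that this argument uses only \ref{Necessitation}, \ref{Transitivity}, \ref{Monotonicity}, and Cut, and in particular does not invoke \ref{Rule3}; the essential idea is simply to lower the leading modality $\gamma$ down to $\alpha$ by monotonicity, peel the tail $A'$ off inside an $\alpha$-modality by necessitation and the induction hypothesis, and finally collapse the resulting doubled modality $\alpha\alpha$ to a single $\alpha$ by transitivity.
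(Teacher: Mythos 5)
Your proof is correct and follows essentially the same route as the paper, which proves the lemma by induction on $\len{A}$ using \ref{Transitivity} and \ref{Monotonicity} (the paper's one-line sketch leaves the use of \ref{Necessitation} and Cut implicit, which you correctly supply). The bookkeeping on the side conditions ($A' \in \Worms_{\alpha+1}$ and $\gamma > \alpha$) is handled properly.
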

\begin{proof}
	By induction on the length of $A$, with the help of \ref{Transitivity} and \ref{Monotonicity}.
\end{proof}

\begin{lemma}\label{lemma:topIsMinimal}
	For any non-trivial worm $A \in \Worms_\alpha$, we have that $A \vdashw \alpha$.
\end{lemma}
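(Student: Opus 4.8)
The plan is to reduce the statement to the behaviour of the leading modality of $A$ together with a single application of \ref{Monotonicity}; no induction is needed. Since $A$ is non-trivial and lies in $\Worms_\alpha$, I can write $A = \beta B$ for some ordinal $\beta$ and some worm $B$ (where $B$ may be $\top$). Membership in $\Worms_\alpha$ guarantees that \emph{every} modality of $A$ is at least $\alpha$, so in particular $\beta \geq \alpha$ for the leftmost one. The whole argument then consists of peeling $B$ down to $\top$ and afterwards lowering $\beta$ to $\alpha$.

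Concretely, I would first invoke axiom A1 to get $B \vdashw \top$, and then apply \ref{Necessitation} to obtain $\beta B \vdashw \beta$, that is, $A \vdashw \beta$. It remains to pass from $\beta$ to $\alpha$. If $\beta = \alpha$, there is nothing more to do and $A \vdashw \alpha$ already holds. Otherwise $\beta > \alpha$, and instantiating \ref{Monotonicity} with the larger ordinal $\beta$, the smaller ordinal $\alpha$, and tail $\top$ yields $\beta \vdashw \alpha$; a single application of Cut (rule R1) to $A \vdashw \beta$ and $\beta \vdashw \alpha$ then gives $A \vdashw \alpha$.

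There is essentially no hard step in this derivation: the two points requiring any care are that $\beta \geq \alpha$ is exactly what membership in $\Worms_\alpha$ supplies for the leftmost modality, and that \ref{Monotonicity} is stated only for the strict inequality $\beta > \alpha$, so the equality case $\beta = \alpha$ must be separated out (and dispatched trivially). Since only the first modality of $A$ is manipulated, the length of $B$, and hence of $A$, plays no role whatsoever.
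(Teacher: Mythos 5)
Your proof is correct, but it takes a genuinely different and in fact shorter route than the paper. The paper proceeds by induction on $\len{A}$: the base case $A = \beta$ is handled via \ref{Monotonicity} and Corollary \ref{lemma:ApA}, and in the step $A = \beta A'$ the induction hypothesis $A' \vdashw \alpha$ is pushed under a modality with \ref{Necessitation} and then collapsed using \ref{Transitivity} ($\alpha A' \vdashw \alpha\alpha \vdashw \alpha$), followed by \ref{Monotonicity} on the leading modality. You avoid induction entirely: writing $A = \beta B$ with $\beta \geq \alpha$, you discard the whole tail in one stroke via axiom A1 ($B \vdashw \top$) followed by a single application of \ref{Necessitation} to get $A \vdashw \beta$, and then lower $\beta$ to $\alpha$ by \ref{Monotonicity} and Cut, correctly splitting off the trivial case $\beta = \alpha$ since \ref{Monotonicity} is stated only for strict inequality. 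Your argument needs neither \ref{Transitivity} nor the earlier corollary, and it makes transparent that only the leftmost modality of $A$ matters; the paper's inductive formulation is more in the spirit of the recursive manipulations used throughout the rest of the development, but for this particular lemma your direct derivation is the more economical one.
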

\begin{proof}
By induction on the length of $A$. If $\len{A} = 1$, then $A = \beta$ for some $\beta \geq \alpha$. The result follows by \ref{Monotonicity} and Corollary \ref{lemma:ApA}. For the induction step, consider $A = \beta A'$, where $\beta \geq \alpha$ and we already know $A' \vdashw \alpha$. Then by \ref{Necessitation} and \ref{Transitivity}, $\alpha A' \vdashw \alpha \alpha \vdashw \alpha$. Since $\beta A' \vdashw \alpha A'$, we are done.
\end{proof}

It is easy to see that $\RC$ extends $\WC$. As we shall later see, $\RC$ is conservative over $\WC$, which means that this extension is, in a sense, not proper. The first of these two claims is articulated in the following theorem.

\begin{theorem}\label{thm:WCimpliesRC}
For any two worms $A$ and $B$ we have that
	$
		A \vdashw B
	$
	implies
	$
		A \vdashr B
	$.
\end{theorem}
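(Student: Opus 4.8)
The plan is to argue by induction on the length of a $\WC$-derivation of $A \vdashw B$, showing at each step that the corresponding $\RC$-sequent is derivable. Since the worms of $\WC$ are exactly the $\land$-free formulas of $\RCzero$ (reading each ordinal $\alpha$ occurring in a worm as the diamond $\la \alpha \ra$), this amounts to checking that every axiom of $\WC$ is provable in $\RC$ and that every rule of $\WC$ is admissible in $\RC$. The axioms are immediate: A1 is $\RC$ axiom 1, \ref{Transitivity} is $\RC$ axiom 3 with $\varphi = A$, and \ref{Monotonicity} is $\RC$ axiom 4. Likewise, Cut and \ref{Necessitation} are precisely $\RC$ rules 1 and 3, so the only rule requiring genuine work is \ref{Rule3}.

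To handle \ref{Rule3}, suppose by induction hypothesis that $A \vdashr B$ and $A \vdashr \la \alpha \ra C$, with $B \in \Worms_{\alpha+1}$; I must derive $A \vdashr B \alpha C$. First apply $\RC$ rule 2 to the two premises to obtain $A \vdashr B \wedge \la \alpha \ra C$. By $\RC$ rule 1 (cut) it then suffices to establish the auxiliary claim
\[
	B \wedge \la \alpha \ra C \vdashr B \alpha C
	\qquad \text{for every } B \in \Worms_{\alpha+1}.
\]

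I would prove this claim by an inner induction on $\len{B}$. When $\len{B} = 0$ we have $B = \top$, so $B \alpha C = \la \alpha \ra C$ and the claim reduces to $\top \wedge \la \alpha \ra C \vdashr \la \alpha \ra C$, an instance of $\RC$ axiom 2. For the step, write $B = \la \beta \ra B'$ with $\beta \geq \alpha + 1 > \alpha$ and $B' \in \Worms_{\alpha+1}$. Since $\beta > \alpha$, axiom \ref{RC_Axiom5} gives $\la \beta \ra B' \wedge \la \alpha \ra C \vdashr \la \beta \ra (B' \wedge \la \alpha \ra C)$. The inner induction hypothesis yields $B' \wedge \la \alpha \ra C \vdashr B' \alpha C$, which $\RC$ rule 3 lifts to $\la \beta \ra (B' \wedge \la \alpha \ra C) \vdashr \la \beta \ra (B' \alpha C) = B \alpha C$. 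Chaining the two sequents by cut completes the step.

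The main obstacle is precisely this auxiliary claim, since it is the only place where the non-trivial $\RC$ axiom \ref{RC_Axiom5} --- the one axiom with no direct $\WC$ counterpart --- comes into play; everything else is a routine one-to-one matching of axioms and rules. I expect the only subtlety to be bookkeeping around the convention identifying $\top B$ with $B$, so that the concatenations $B \alpha C$ parse correctly, in particular in the base case of the inner induction.
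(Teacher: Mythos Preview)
Your proposal is correct and follows exactly the same approach as the paper's proof: induction on the length of a $\WC$-derivation, with the only nontrivial case being \ref{Rule3}, which you handle (as the paper indicates) by an inner induction on $\len{B}$ using Axiom~\ref{RC_Axiom5}. The paper's proof is a two-line sketch of precisely the argument you have spelled out in full.
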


\begin{proof}
By an easy induction on the length of a $\WC$ proof. To see that Rule \ref{Rule3} is admissible in $\RC$, we use induction on the length of $B$ and Axiom \ref{RC_Axiom5}.
\end{proof}

The proof of the converse is a bit more involved. We shall use the fact that an implication between worms can be recursively broken down into implications between simpler worms.

\section{Decomposing worms}

The notions of $\alpha$-head and $\alpha$-remainder are useful to break down worms into smaller ones.

\begin{definition}[$\alpha$-head, $\alpha$-remainder]
	Let $A$ be a worm and $\alpha$ be an ordinal.
	
	The \emph{$\alpha$-head} of $A$ --- we write $h_\alpha(A)$ --- is defined recursively as: $h_\alpha(\top) := \top$, $h_\alpha(\beta A) := \beta h_\alpha(A)$ if $\beta \geq \alpha$, and $h_\alpha(\beta A) := \top$ if $\beta < \alpha$.
	
	Likewise, the \emph{$\alpha$-remainder} of $A$ --- we write $r_\alpha(A)$ --- is defined recursively as: $r_\alpha(\top) := \top$, $r_\alpha(\beta A) := r_\alpha(A)$ if $\beta \geq \alpha$, and $r_\alpha(\beta A) := \beta A$ if $\beta < \alpha$.
\end{definition}

Intuitively, the $\alpha$-head of $A$ is the greatest initial segment of $A$ which is in $\Worms_{\alpha}$, and the $\alpha$-remainder is what remains after cutting off the $\alpha$-head.
It then follows that $A = h_\alpha(A) r_\alpha(A)$, for every worm $A$ and ordinal $\alpha$. An immediate consequence is that the lengths of the $\alpha$-head and of the $\alpha$-remainder of a worm are always at most the length of the worm itself.

It is possible to prove that $A \equiv_\RC h_\alpha(A) \land r_\alpha(A)$ for every worm $A$ and ordinal $\alpha$. In $\WC$ we cannot state such a result due to the lack of the conjunction connective in the language. We can, however, obtain the same consequences.
\begin{lemma}\label{lemma:HeadAndRemainder}
	Let $A$ be a worm and $\alpha$ be an ordinal. Then:
	\begin{enumerate}[ref=(\roman*)]
		\item $A \vdashw h_\alpha(A)$;	\label{item:AndIntroL}
		\item $A \vdashw r_\alpha(A)$;	\label{item:AndIntroR}
		\item If $B \vdashw h_\alpha(A)$ and $B \vdashw r_\alpha(A)$, then $B \vdashw A$.	\label{item:AndExcl}
	\end{enumerate}
\end{lemma}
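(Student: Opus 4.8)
The plan is to reduce all three items to the structural identity $A = h_\alpha(A)\, r_\alpha(A)$ noted just after the definition, together with the fact that $h_\alpha(A) \in \Worms_\alpha$. I would write $H := h_\alpha(A)$ and $R := r_\alpha(A)$, so that $A = HR$, and then run a single case split on whether $R = \top$. The key observation driving everything is that when $R$ is non-trivial, the recursive clauses of $r_\alpha$ force its leading ordinal $\gamma$ to satisfy $\gamma < \alpha$, so we may write $R = \gamma R'$; and since every symbol of $H$ is at least $\alpha > \gamma$, we get $H \in \Worms_\alpha \subseteq \Worms_{\gamma + 1}$. Pinning down this side condition $H \in \Worms_{\gamma+1}$ is what makes the asymmetric tools below applicable.

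Item \ref{item:AndIntroL} needs no case split: I would simply apply Lemma \ref{lemma:WormImpliesFirstPartOfIt} to $HR$, obtaining $A = HR \vdashw H = h_\alpha(A)$.

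For item \ref{item:AndIntroR}, if $R = \top$ the claim is just the axiom $A \vdashw \top$. Otherwise $R = \gamma R'$ with $\gamma < \alpha$ and $H \in \Worms_{\gamma+1}$, so Lemma \ref{lemma:WormImplesSecondPartOfItSometimes}, instantiated at the ordinal $\gamma$, gives $H \gamma R' \vdashw \gamma R'$, which is exactly $A \vdashw r_\alpha(A)$. For item \ref{item:AndExcl}, if $R = \top$ then $A = H$ and the first hypothesis $B \vdashw h_\alpha(A)$ already is $B \vdashw A$. Otherwise $R = \gamma R'$ with $\gamma < \alpha$ and $H \in \Worms_{\gamma+1}$, and I would feed the two hypotheses $B \vdashw H$ and $B \vdashw \gamma R'$ directly into Rule \ref{Rule3}, whose side condition $H \in \Worms_{\gamma+1}$ is precisely what the decomposition supplied; this yields $B \vdashw H \gamma R' = A$.

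The main obstacle is really item \ref{item:AndExcl}. Because $\WC$ lacks a conjunction connective, there is no literal $\land$-introduction to appeal to, and Rule \ref{Rule3} is the bespoke surrogate for it; the entire purpose of the case analysis is to massage the target into the exact syntactic shape that Rule \ref{Rule3} demands, with the prefix $H$ sitting in the required class $\Worms_{\gamma+1}$. The only genuinely delicate point throughout is reading off from the definition of $r_\alpha$ that the leading ordinal of a non-trivial remainder is strictly below $\alpha$, since it is this inequality that simultaneously licenses Lemma \ref{lemma:WormImplesSecondPartOfItSometimes} and Rule \ref{Rule3} with the correct side condition.
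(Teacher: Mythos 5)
Your proof is correct and follows essentially the same route as the paper's: decompose $A$ syntactically as $h_\alpha(A)\,r_\alpha(A)$, derive item (i) from Lemma \ref{lemma:WormImpliesFirstPartOfIt}, and handle items (ii) and (iii) by splitting on whether the remainder is $\top$, invoking Lemma \ref{lemma:WormImplesSecondPartOfItSometimes} and Rule \ref{Rule3} respectively in the non-trivial case. Your explicit verification that the leading ordinal $\gamma$ of a non-trivial remainder satisfies $\gamma < \alpha$, so that $h_\alpha(A) \in \Worms_\alpha \subseteq \Worms_{\gamma+1}$ meets the side conditions, is exactly the observation the paper's terser proof relies on.
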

\begin{proof}
	Note that $A = h_\alpha(A) r_\alpha(A)$, this is to say, they are syntactically the same.
	Thus, Part \ref{item:AndIntroL} follows from Lemma \ref{lemma:WormImpliesFirstPartOfIt}.
	Part \ref{item:AndIntroR} is a consequence of Lemma \ref{lemma:WormImplesSecondPartOfItSometimes}, taking into consideration that $h_\alpha(A) \in \Worms_{\alpha}$ and that $r_\alpha(A)$ always starts with either $\top$ --- making the result trivial --- or with an ordinal less than $\alpha$.
	Part \ref{item:AndExcl} follows from rule \ref{Rule3} unless $r_\alpha(A) = \top$, in which case it is trivial.
\end{proof}

There is another relevant part of a worm, the $\alpha$-body. It is obtained from the $(\alpha + 1)$-remainder by dropping its leftmost modality (as long as said remainder is not trivial).

\begin{definition}[$\alpha$-body]
	Let $A$ be a worm and $\alpha$ an ordinal.
	The \emph{$\alpha$-body} of $A$ --- we write $b_\alpha(A)$ --- is defined from $r_{\alpha + 1}(A)$ as follows: if $r_{\alpha + 1}(A) = \top$ then $b_\alpha(A) := \top$, and if $r_{\alpha + 1}(A) = \beta B$ then $b_\alpha(A) := B$.
\end{definition}

The $\alpha$-body of a non-trivial worm $A$ is particularly useful because its length is always strictly smaller than the length of $A$.
We can also prove a counterpart of Lemma \ref{lemma:HeadAndRemainder} about the $\alpha$-body.

\begin{lemma}\label{lemma:body}
	Let $\alpha$ be an ordinal and $A$ be a non-trivial worm in $\Worms_\alpha$. Then:
	\begin{enumerate}[ref=(\roman*)]
		\item \label{item:body-bALTNA} $A \vdashw \alpha b_\alpha(A)$; \label{item:AndIntroR_body}
		\item If $B \vdashw h_{\alpha + 1}(A)$ and $B \vdashw \alpha b_\alpha(A)$, then $B \vdashw h_{\alpha + 1}(A) \alpha b_{\alpha}(A)$; \label{item:AndExcl1_body}
		\item $h_{\alpha + 1}(A) \alpha b_\alpha(A) \vdashw A$; \label{item:AndExcl2_body}
		\item $A \equiv_\WC h_{\alpha + 1}(A) \alpha b_\alpha(A)$. \label{item:body_summary}
	\end{enumerate}
\end{lemma}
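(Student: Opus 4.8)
The plan is to lean on the syntactic decomposition $A = h_{\alpha+1}(A)\, r_{\alpha+1}(A)$ and to split on whether the $(\alpha+1)$-remainder is trivial. The first thing I would record is a structural observation that governs all four parts: since $A \in \Worms_\alpha$, every modality of $A$ is at least $\alpha$, while by construction $r_{\alpha+1}(A)$ is either $\top$ or begins with a modality strictly below $\alpha+1$, hence at most $\alpha$. Putting these together, whenever $r_{\alpha+1}(A)$ is non-trivial its leading modality is forced to be \emph{exactly} $\alpha$, so that $r_{\alpha+1}(A) = \alpha\, b_\alpha(A)$ and therefore $A = h_{\alpha+1}(A)\, \alpha\, b_\alpha(A)$ syntactically; and when $r_{\alpha+1}(A) = \top$ we have $b_\alpha(A) = \top$ and $A = h_{\alpha+1}(A) \in \Worms_{\alpha+1}$.

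For Part~(i) I would argue by cases on this dichotomy. If $r_{\alpha+1}(A) = \top$, then $\alpha\, b_\alpha(A) = \alpha$ and the claim $A \vdashw \alpha$ is precisely Lemma~\ref{lemma:topIsMinimal} for the non-trivial worm $A \in \Worms_\alpha$. Otherwise $A = h_{\alpha+1}(A)\, \alpha\, b_\alpha(A)$ with $h_{\alpha+1}(A) \in \Worms_{\alpha+1}$, and Lemma~\ref{lemma:WormImplesSecondPartOfItSometimes} (taking $h_{\alpha+1}(A)$ in the role of its $A$ and $b_\alpha(A)$ in the role of its $B$) delivers $A \vdashw \alpha\, b_\alpha(A)$ at once.

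Part~(ii) should fall out of rule~\ref{Rule3} essentially verbatim: reading the premise worm $B$ as the antecedent of \ref{Rule3}, the initial segment $h_{\alpha+1}(A)$ as its $B$ (which lies in $\Worms_{\alpha+1}$, exactly the side condition), and $b_\alpha(A)$ as its $C$, the two hypotheses $B \vdashw h_{\alpha+1}(A)$ and $B \vdashw \alpha\, b_\alpha(A)$ are what \ref{Rule3} consumes to conclude $B \vdashw h_{\alpha+1}(A)\,\alpha\, b_\alpha(A)$. Part~(iii) again splits on the remainder: when $r_{\alpha+1}(A) = \top$ the left-hand side is $A\alpha$ and the claim is an instance of Lemma~\ref{lemma:WormImpliesFirstPartOfIt} with tail $\alpha$, whereas when $r_{\alpha+1}(A)$ is non-trivial the left-hand side equals $A$ syntactically and the claim is reflexivity, i.e.\ Corollary~\ref{lemma:ApA}. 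Finally, Part~(iv) assembles the rest: the direction $h_{\alpha+1}(A)\,\alpha\, b_\alpha(A) \vdashw A$ is Part~(iii), and for $A \vdashw h_{\alpha+1}(A)\,\alpha\, b_\alpha(A)$ I would instantiate Part~(ii) at $B := A$, whose hypotheses $A \vdashw h_{\alpha+1}(A)$ and $A \vdashw \alpha\, b_\alpha(A)$ are supplied by Lemma~\ref{lemma:HeadAndRemainder}\ref{item:AndIntroL} and by Part~(i) respectively.

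The individual steps are short, so I do not expect a genuinely hard calculation anywhere. The one place demanding care is the opening structural observation, namely pinning down that the leading modality of a non-trivial $r_{\alpha+1}(A)$ is forced to be exactly $\alpha$ rather than merely $\le \alpha$; this is what validates the identity $r_{\alpha+1}(A) = \alpha\, b_\alpha(A)$ and thereby collapses Parts~(ii)--(iv) into statements about the single worm $A$. The trivial-remainder case, where $b_\alpha(A) = \top$ and the body genuinely vanishes, is the secondary pitfall, and it is exactly why Lemmas~\ref{lemma:topIsMinimal} and~\ref{lemma:WormImpliesFirstPartOfIt} are kept on hand as the fallbacks for Parts~(i) and~(iii).
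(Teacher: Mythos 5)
Your proposal is correct and follows essentially the same route as the paper: the same case split on whether $r_{\alpha+1}(A)$ is trivial, the same key observation that a non-trivial $(\alpha+1)$-remainder of a worm in $\Worms_\alpha$ must begin with exactly $\alpha$, and the same supporting results (you merely unpack Lemma~\ref{lemma:HeadAndRemainder} into Lemma~\ref{lemma:WormImplesSecondPartOfItSometimes}, Rule~\ref{Rule3}, and Corollary~\ref{lemma:ApA} where the paper cites it wholesale). No gaps.
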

\begin{proof}
	We make a case distinction on $r_{\alpha + 1}(A)$ in order to prove Parts \ref{item:AndIntroR_body} to \ref{item:AndExcl2_body} separately in each case.
	
	Suppose that $r_{\alpha + 1}(A) = \beta b_\alpha(A)$ for some ordinal $\beta$. Since $A \in \Worms_\alpha$, then $\beta \geq \alpha$. But since it is in the $(\alpha + 1)$-remainder, $\beta < \alpha + 1$. We conclude that $\beta = \alpha$, and hence that $r_{\alpha + 1}(A) = \alpha b_\alpha(A)$. Then Parts \ref{item:AndIntroR_body} to \ref{item:AndExcl2_body} are just a corollary of Lemma \ref{lemma:HeadAndRemainder}.

	However it can be the case that $r_{\alpha + 1}(A) = \top$ and hence $b_\alpha(A) = \top$ as well.
	Then Part \ref{item:AndIntroR_body} becomes an instance of Lemma \ref{lemma:topIsMinimal},
	Part \ref{item:AndExcl1_body} follows from Rule \ref{Rule3}
	and Part \ref{item:AndExcl2_body} is a consequence of Lemma \ref{lemma:WormImpliesFirstPartOfIt}.
	
	Finally, Part \ref{item:body_summary} is a corollary of all of the other parts put together.
\end{proof}

The following result is Lemma 3.15 of \cite{FernandezDuqueJJJ2014}. It describes part of a recursive decision procedure for provability in $\RC$ between worms.

\begin{lemma}\label{lemma:RecursiveWormImplication}
For any two worms $A$ and $B$ and for any ordinal $\alpha$ we have that
	$
		A \vdash_\RC \alpha B
	$
	if and only if both
	$
		h_\alpha(A) \vdashr \alpha h_\alpha(B)$ and $A \vdashr r_\alpha(B)
	$.
\end{lemma}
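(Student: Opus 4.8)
The plan is to leverage the $\RC$ head--remainder decomposition $A \equiv_\RC h_\alpha(A) \land r_\alpha(A)$ recorded just before Lemma~\ref{lemma:HeadAndRemainder}, applied not to $A$ but to the worm $\alpha B$ appearing on the right. Since the leading modality $\alpha$ satisfies $\alpha \geq \alpha$, the definitions give $h_\alpha(\alpha B) = \alpha\, h_\alpha(B)$ and $r_\alpha(\alpha B) = r_\alpha(B)$, so that $\alpha B \equiv_\RC \alpha\, h_\alpha(B) \land r_\alpha(B)$. This one equivalence turns the biconditional to be proved into routine conjunction-introduction and conjunction-elimination, modulo a single genuinely modal principle that I isolate as $(\star)$ below.

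First the easy directions. For right-to-left, assume $h_\alpha(A) \vdashr \alpha\, h_\alpha(B)$ and $A \vdashr r_\alpha(B)$. From $A \vdashr h_\alpha(A)$ (a consequence of the $\RC$ decomposition and conjunction elimination, or the $\RC$-transfer of Lemma~\ref{lemma:HeadAndRemainder}\ref{item:AndIntroL} via Theorem~\ref{thm:WCimpliesRC}) and Cut we obtain $A \vdashr \alpha\, h_\alpha(B)$; together with $A \vdashr r_\alpha(B)$ the conjunction rule yields $A \vdashr \alpha\, h_\alpha(B) \land r_\alpha(B)$, and a final Cut against $\alpha\, h_\alpha(B)\land r_\alpha(B) \vdashr \alpha B$ gives $A \vdashr \alpha B$. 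For left-to-right, assume $A \vdashr \alpha B$; cutting with $\alpha B \vdashr r_\alpha(B)$ gives the second required condition $A \vdashr r_\alpha(B)$ at once, and cutting with $\alpha B \vdashr \alpha\, h_\alpha(B)$ gives $A \vdashr \alpha\, h_\alpha(B)$. So everything reduces to replacing $A$ by $h_\alpha(A)$ on the left of this last sequent.

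Writing $C := h_\alpha(B) \in \Worms_\alpha$, the whole statement thus comes down to the principle $(\star)$: for $C \in \Worms_\alpha$, we have $A \vdashr \alpha C$ if and only if $h_\alpha(A) \vdashr \alpha C$. The direction $\Leftarrow$ of $(\star)$ is again just $A \vdashr h_\alpha(A)$ followed by Cut. The direction $\Rightarrow$ is the main obstacle: it expresses that the low part $r_\alpha(A)$ of $A$, whose leading modality is some $\beta < \alpha$, is \emph{inert} for deriving the purely high-level worm $\alpha C$. It cannot be reached by pre-composition, since only $A \vdashr h_\alpha(A)$ holds and never its converse; the real content is precisely that modalities below $\alpha$ do not strengthen a worm as far as $\Worms_\alpha$-consequences are concerned.

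To prove $(\star)\Rightarrow$ I would argue the contrapositive semantically, using Dashkov's completeness of $\RC$ with respect to finite frames $(W,\{R_\gamma\})$ in which each $R_\gamma$ is transitive, $R_\gamma \subseteq R_\delta$ for $\gamma > \delta$, and $x R_\gamma y$ together with $x R_\delta z$ imply $y R_\delta z$ for $\gamma > \delta$ \cite{Dashkov2012}. Starting from a pointed model refuting $h_\alpha(A) \vdashr \alpha C$, that is a point $x$ with $x \models h_\alpha(A)$ but $x \not\models \alpha C$, I would graft onto $x$ a fresh branch witnessing $r_\alpha(A)$ using only relations $R_\beta$ with $\beta < \alpha$. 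Because $R_\alpha \subseteq R_\beta$ and never conversely, such a branch joins $x$ only at levels below $\alpha$, so no new $R_\alpha$-successor of $x$ is created and $x \not\models \alpha C$ is preserved, while now $x \models A$; this refutes $A \vdashr \alpha C$. The delicate step, and the one I expect to cost the most, is verifying that the grafting respects the third frame condition, so that the enlarged structure is still an $\RC$-frame: the pre-existing high-modality successors of $x$ must be joined to the new low-modality points in a way that keeps the relations transitive and closed under that condition, without ever producing an $R_\alpha$-edge out of $x$. An alternative route invokes the conservativity of $\glp^0$ over $\RCzero$ together with the well-understood order structure of worms in Ignatiev's model, where the inertness of sub-$\alpha$ modalities for $\Worms_\alpha$-consequences is transparent; but the frame argument above is the most self-contained.
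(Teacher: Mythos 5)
The paper does not actually prove this lemma: it is imported verbatim as Lemma 3.15 of \cite{FernandezDuqueJJJ2014}, so there is no in-paper argument to compare yours against. Judged on its own terms, your reduction is correct and cleanly organized: the syntactic identities $h_\alpha(\alpha B)=\alpha\, h_\alpha(B)$ and $r_\alpha(\alpha B)=r_\alpha(B)$, together with $\alpha B\equiv_\RC \alpha\, h_\alpha(B)\land r_\alpha(B)$ and $A\vdashr h_\alpha(A)$, do collapse the biconditional to your principle $(\star)$, whose $\Leftarrow$ half is a single Cut. All of the mathematical content of the lemma therefore sits in $(\star)\Rightarrow$, and there your argument is a sketch rather than a proof.

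Two points in that sketch need repair. First, a misstatement: the grafted branch witnessing $r_\alpha(A)$ does \emph{not} use only relations $R_\beta$ with $\beta<\alpha$ --- only its first edge out of $x$ is low, since $r_\alpha(A)$ may contain arbitrarily large modalities after its leading one (e.g.\ $r_1(2\,0\,3)=0\,3$). What saves the argument is the formulation you give next: the branch \emph{attaches to} $x$ by an edge of index $<\alpha$. Second, preserving $x\nVdash\alpha C$ requires more than ``no new $R_\alpha$-successor of $x$ is created'': the \emph{old} $R_\alpha$-successors of $x$ acquire new outgoing edges under closure (the Euclidean condition forces $y R_\beta z_1$ for every old $y$ with $x R_\gamma y$ and $\gamma>\beta$), so you must also check that none of them comes to satisfy $C$. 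Both points are settled by one induction on the closure process: every edge created from an old point into the grafted component, every edge from a new point back to an old one, and every new edge between old points has index $<\alpha$; hence the $R_{\geq\alpha}$-reachability structure of the old part is untouched, and since $C\in\Worms_\alpha$ its truth at old points is unchanged. The same Euclidean propagation is also what gives $x\Vdash A$, since the endpoint of the chain witnessing $h_\alpha(A)$ inherits the low edge into $z_1$. With that closure lemma written out, your proof is complete and constitutes a legitimate, self-contained alternative to citing \cite{FernandezDuqueJJJ2014}.
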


Let us see that we can prove one of the implications in $\WC$, which we will later use in the proof of our main theorem (Theorem \ref{thm:RCimpliesWC}). There is no \emph{a priori} reason why the other implication can't also hold; in fact, we will see that it does, since the calculi are equivalent for worms. It just so happens that we have no use for it.

\begin{lemma}\label{lemma:RecursiveWormImplicationWC}
For any two worms $A$ and $B$, and for any ordinal $\alpha$, if we have that
	$
		h_\alpha(A) \vdashw \alpha h_\alpha(B)$ and $A \vdashw r_\alpha(B)
	$,
	then we have $A \vdashw \alpha B$.
\end{lemma}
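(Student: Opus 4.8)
The plan is to assemble the target $\alpha B$ by a single application of Rule~\ref{Rule3}, after rewriting it through the decomposition $B = h_\alpha(B)\, r_\alpha(B)$, so that $\alpha B = \alpha\, h_\alpha(B)\, r_\alpha(B)$. The two hypotheses are tailored to supply exactly the two premises this rule needs: one controlling the initial segment $\alpha\, h_\alpha(B)$ and one controlling the tail $r_\alpha(B)$.

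First I would convert the first hypothesis into a statement about $A$ itself rather than $h_\alpha(A)$. Since $A \vdashw h_\alpha(A)$ by Lemma~\ref{lemma:HeadAndRemainder}\ref{item:AndIntroL}, composing this with $h_\alpha(A) \vdashw \alpha\, h_\alpha(B)$ via Cut yields $A \vdashw \alpha\, h_\alpha(B)$, which is the ``head'' half of the target.

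Next I would split on the shape of $r_\alpha(B)$. If $r_\alpha(B) = \top$, then $B = h_\alpha(B)$ and the target $\alpha B = \alpha\, h_\alpha(B)$ has already been derived in the previous step, so nothing more is needed. Otherwise $r_\alpha(B) = \beta C$ for some worm $C$ and some ordinal $\beta < \alpha$, where the inequality is forced by the definition of the $\alpha$-remainder. I would then invoke Rule~\ref{Rule3} with the cut point at $\beta$: its two premises are $A \vdashw \alpha\, h_\alpha(B)$ (just established) and $A \vdashw \beta C$, the latter being precisely the second hypothesis $A \vdashw r_\alpha(B)$. The conclusion is $A \vdashw \alpha\, h_\alpha(B)\, \beta C = \alpha\, h_\alpha(B)\, r_\alpha(B) = \alpha B$, as required.

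The only delicate point, and the main obstacle, is checking the side condition of Rule~\ref{Rule3}, namely that the left factor $\alpha\, h_\alpha(B)$ lies in $\Worms_{\beta+1}$. This holds because every modality of $\alpha\, h_\alpha(B)$ is at least $\alpha$, and $\alpha > \beta$ gives $\alpha \geq \beta + 1$; hence $\alpha\, h_\alpha(B) \in \Worms_\alpha \subseteq \Worms_{\beta+1}$. The trivial case $r_\alpha(B) = \top$ has to be isolated precisely because Rule~\ref{Rule3} offers no ordinal $\beta$ to cut at when the remainder is empty.
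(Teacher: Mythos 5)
Your proposal is correct and follows essentially the same route as the paper: decompose $B$ as $h_\alpha(B)\,r_\alpha(B)$, split on whether $r_\alpha(B)=\top$, handle the trivial case via Lemma \ref{lemma:HeadAndRemainder} and Cut, and otherwise apply Rule \ref{Rule3} at the leading modality $\beta<\alpha$ of the remainder. You simply spell out the details (the Cut through $A\vdashw h_\alpha(A)$ and the verification that $\alpha\,h_\alpha(B)\in\Worms_{\beta+1}$) that the paper leaves implicit.
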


\begin{proof}
	Taking into consideration that $A = h_\alpha(A) r_\alpha(A)$ and similarly for $B$, consider two cases. In the first case, $r_\alpha(B) = \top$, and this is a consequence of Lemma \ref{lemma:HeadAndRemainder}. In the second case, $r_\alpha(B) = \beta C$ for some $\beta < \alpha$ and worm $C$. Then the result follows from Rule \ref{Rule3}.
\end{proof}

We now want to prove that $\RC$ is conservative over $\WC$ using the following inductive strategy. If $A \vdashr B$, we use Lemma \ref{lemma:RecursiveWormImplication} to recast this into a collection of provability statements in $\RC$ between worms with smaller lengths. We then translate them to $\WC$ using the induction hypothesis, and finally go back with the help of Lemma \ref{lemma:RecursiveWormImplicationWC}. However, depending on the worms $A$ and $B$, it could be the case that these two theorems are not enough, since they don't always reduce the length of the provability statements. In what follows, we introduce some more useful notions and results, which will help us deal with that problem.

\section{Well founded orders on worms}

It is possible to define an order relation between worms as is standard in the literature.
\begin{definition}[Ordering worms]
	We say that $A <_\alpha B$ if $B \vdashw \alpha A$. Furthermore, we say that $A \leq_\alpha B$ if either $A <_\alpha$ or $A \equiv_{\WC} B$. The provability can be taken in $\RC$ to obtain $<^{\RC}_\alpha$ and $\leq^{\RC}_\alpha$, respectively.
\end{definition}

It is well-known that $<^{\RC}_\alpha$ is irreflexive \cite{BeklemishevFernandezDuqueJJJ2014}. Since $\WC$ is embedded in $\RC$, we also know that $<_\alpha$ is irreflexive.
It is easy to see that both relations are transitive.

Our goal now is to show that $<_\alpha$ is a total relation over worms in $\Worms_\alpha$. This has been shown for $<^\RC_\alpha$ using worm normal forms \cite{BeklemishevFernandezDuqueJJJ2014}, but here we follow a different strategy, proposed in \cite{FernandezDuque2017}. We start by presenting a number of useful sufficient conditions to deduce $A <_\alpha B$, and one to deduce $A \equiv B$.

\begin{lemma}\label{lemma:recursionBody}
	Let $A, B \in \Worms_\alpha$ such that $A, B \neq \top$. Then in $\WC$ (and hence in $\RC$) we have the following:
	\begin{enumerate}
		\item If $b_\alpha(B) \vdash A$, then $A <_\alpha B$;
		\item If $A <_\alpha b_\alpha(B)$, then $A <_\alpha B$;
		\item If $b_\alpha(A) <_\alpha B$ and $h_{\alpha + 1}(A) <_{\alpha + 1} h_{\alpha + 1}(B)$, then $A <_{\alpha + 1} B$ (and consequently $A <_{\alpha} B$);
		\item If $b_\alpha(A) <_\alpha B$ and $b_\alpha(B) <_\alpha A$ and $h_{\alpha + 1}(A) \equiv h_{\alpha + 1}(B)$, then $A \equiv B$.
	\end{enumerate}
\end{lemma}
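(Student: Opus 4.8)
The engine for all four items is the decomposition of Lemma~\ref{lemma:body}, namely $A \equiv_\WC h_{\alpha+1}(A)\,\alpha\,b_\alpha(A)$ for non-trivial $A \in \Worms_\alpha$, combined with \ref{Necessitation}, \ref{Transitivity}, Cut and --- for the harder items --- Rule~\ref{Rule3}. Throughout I unfold the definition $X <_\beta Y$ as $Y \vdashw \beta X$, so each claim becomes a concrete provability statement to assemble. I would treat the four items in increasing difficulty. For Part~1, the hypothesis $b_\alpha(B) \vdashw A$ gives $\alpha b_\alpha(B) \vdashw \alpha A$ by \ref{Necessitation}; chaining this after $B \vdashw \alpha b_\alpha(B)$ (Lemma~\ref{lemma:body}) via Cut yields $B \vdashw \alpha A$, i.e.\ $A <_\alpha B$. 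Part~2 is the same idea with one extra collapse: from $b_\alpha(B) \vdashw \alpha A$ I get $\alpha b_\alpha(B) \vdashw \alpha \alpha A$ by \ref{Necessitation}, and \ref{Transitivity} turns $\alpha\alpha A$ into $\alpha A$; composing with $B \vdashw \alpha b_\alpha(B)$ again gives $B \vdashw \alpha A$.

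Part~3 is the crux. Translating the hypotheses, $b_\alpha(A) <_\alpha B$ reads $B \vdashw \alpha b_\alpha(A)$, and $h_{\alpha+1}(A) <_{\alpha+1} h_{\alpha+1}(B)$ reads $h_{\alpha+1}(B) \vdashw (\alpha+1) h_{\alpha+1}(A)$. Using $B \vdashw h_{\alpha+1}(B)$ (Lemma~\ref{lemma:HeadAndRemainder}) and Cut, the second becomes $B \vdashw (\alpha+1) h_{\alpha+1}(A)$. Now I want to glue the two facts $B \vdashw (\alpha+1) h_{\alpha+1}(A)$ and $B \vdashw \alpha b_\alpha(A)$ into a single worm, which is exactly what Rule~\ref{Rule3} does, provided the prefix of the first lies in $\Worms_{\alpha+1}$. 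Indeed $(\alpha+1) h_{\alpha+1}(A) \in \Worms_{\alpha+1}$, since $h_{\alpha+1}(A)$ already is and $\alpha+1 \geq \alpha+1$, so Rule~\ref{Rule3} delivers $B \vdashw (\alpha+1)\,h_{\alpha+1}(A)\,\alpha\,b_\alpha(A)$. Applying \ref{Necessitation} to the decomposition $h_{\alpha+1}(A)\,\alpha\,b_\alpha(A) \vdashw A$ of Lemma~\ref{lemma:body} and then Cut converts this into $B \vdashw (\alpha+1) A$, i.e.\ $A <_{\alpha+1} B$; one use of \ref{Monotonicity} (as $(\alpha+1) A \vdashw \alpha A$) gives the consequence $A <_\alpha B$.

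Part~4 recycles the Part~3 machinery symmetrically. To prove $B \vdashw A$ I note that $h_{\alpha+1}(A) \equiv h_{\alpha+1}(B)$ together with $B \vdashw h_{\alpha+1}(B)$ (Lemma~\ref{lemma:HeadAndRemainder}) gives $B \vdashw h_{\alpha+1}(A)$; combined with $B \vdashw \alpha b_\alpha(A)$ (from $b_\alpha(A) <_\alpha B$), Lemma~\ref{lemma:body} (equivalently, a direct application of Rule~\ref{Rule3}) assembles $B \vdashw h_{\alpha+1}(A)\,\alpha\,b_\alpha(A)$, and Lemma~\ref{lemma:body} then yields $B \vdashw A$. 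Since all three hypotheses are symmetric in $A$ and $B$, the identical argument with the roles exchanged gives $A \vdashw B$, whence $A \equiv B$.

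The only genuinely delicate point I expect is the side-condition bookkeeping in Part~3: one must pass to modality $\alpha+1$ on the head precisely so that the prefix $(\alpha+1) h_{\alpha+1}(A)$ lands in $\Worms_{\alpha+1}$ and Rule~\ref{Rule3} becomes applicable --- working at level $\alpha$ would fail the side condition. Everything else is routine chaining of Cut, \ref{Necessitation}, \ref{Transitivity} and \ref{Monotonicity} around the decomposition lemma.
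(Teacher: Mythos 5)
Your proposal is correct and follows essentially the same route as the paper: items 1, 2 and 4 match the paper's argument, and for item 3 you simply inline (via Rule~\ref{Rule3} and the decomposition of Lemma~\ref{lemma:body}) the content of Lemma~\ref{lemma:RecursiveWormImplicationWC}, which is what the paper cites there. The side-condition check that $(\alpha+1)h_{\alpha+1}(A) \in \Worms_{\alpha+1}$ is exactly the right point to flag, and your handling of it is correct.
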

\begin{proof} 
	For the first item, from $b_\alpha(B) \vdash A$ we get by \ref{Necessitation} that $\alpha b_\alpha(B) \vdash \alpha A$. Since by Lemma \ref{lemma:body} we know that $B \vdash \alpha b_\alpha(B)$, we can conclude that $B \vdash \alpha A$.
  The second item follows by the transitivity of $<_\alpha$, taking into account that $b_\alpha(B) <_\alpha B$ (Lemma \ref{lemma:body}.\ref{item:body-bALTNA}).
	The third item follows from Lemma \ref{lemma:RecursiveWormImplicationWC}.	
	Finally, for the fourth item we use $B \vdash \alpha b_\alpha(A)$ and $h_{\alpha + 1}(B) \vdash h_{\alpha + 1}(A)$ to get $B \vdash h_{\alpha + 1}(A) \alpha b_\alpha(A)$, and hence $B \vdash A$. Then we obtain $A \vdash B$ in the same way.
\end{proof}

Now we are ready to prove the totality of $<_\alpha$ for worms in $\Worms_\alpha$.

\begin{lemma}[Trichotomy, \cite{FernandezDuque2017}]\label{lemma:trichotomy}
	Given worms $A, B \in \Worms_\alpha$, we have that either $A <_\alpha B$, or $A \equiv B$, or $B <_\alpha A$.
\end{lemma}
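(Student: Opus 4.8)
The plan is to prove Trichotomy by induction on the combined lengths $\len{A} + \len{B}$, using the decomposition machinery from Lemma \ref{lemma:recursionBody} to reduce each case to strictly shorter worms. The base cases are where $A = \top$ or $B = \top$: if both are trivial then $A \equiv B$; if exactly one is trivial, say $B = \top$, then by Lemma \ref{lemma:topIsMinimal} we have $A \vdashw \alpha = \alpha \top$, which by definition means $\top <_\alpha A$, giving us the desired trichotomy. So the interesting work happens when both $A$ and $B$ are non-trivial worms in $\Worms_\alpha$, which is exactly the hypothesis under which Lemma \ref{lemma:recursionBody} applies.

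For the inductive step, the strategy is to peel off structure using the $\alpha$-body and the $(\alpha+1)$-head. First I would compare $h_{\alpha+1}(A)$ and $h_{\alpha+1}(B)$. These are worms in $\Worms_{\alpha+1}$, hence in $\Worms_\alpha$, and their lengths are at most those of $A$ and $B$; crucially, when both $A, B \in \Worms_\alpha$ are non-trivial, I expect the relevant recursive calls to land on genuinely shorter data because the $\alpha$-body strictly decreases length (as noted in the text right before Lemma \ref{lemma:body}). Applying the induction hypothesis at level $\alpha+1$ to the heads gives one of $h_{\alpha+1}(A) <_{\alpha+1} h_{\alpha+1}(B)$, or $h_{\alpha+1}(A) \equiv h_{\alpha+1}(B)$, or $h_{\alpha+1}(B) <_{\alpha+1} h_{\alpha+1}(A)$. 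In the first case, I would also invoke the induction hypothesis on the bodies to compare $b_\alpha(A)$ with $B$ (both in $\Worms_\alpha$, the body being strictly shorter), and then assemble the conclusion $A <_{\alpha+1} B$, hence $A <_\alpha B$, via item (3) of Lemma \ref{lemma:recursionBody}. The third case is symmetric, yielding $B <_\alpha A$.

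The delicate case is when the heads are equivalent, $h_{\alpha+1}(A) \equiv h_{\alpha+1}(B)$. Here I would apply the induction hypothesis to compare the two bodies, or more precisely to run the recursion of items (1), (2), and (4) of Lemma \ref{lemma:recursionBody}. If $b_\alpha(A) <_\alpha B$ and $b_\alpha(B) <_\alpha A$ both hold, then item (4) delivers $A \equiv B$ directly. Otherwise, I would use items (1) and (2) to convert a failure of one of these strict inequalities into the reverse comparison: for instance, if $b_\alpha(A) <_\alpha B$ fails, then by the induction hypothesis applied to $b_\alpha(A)$ and $B$ we must have $B <_\alpha b_\alpha(A)$ or $B \equiv b_\alpha(A)$, and either of these feeds into items (1) or (2) to conclude $B <_\alpha A$. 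Symmetrically for the failure on the other side.

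The main obstacle I anticipate is bookkeeping the well-foundedness of the recursion: I must check that every appeal to the induction hypothesis is made on a pair whose total length is strictly smaller, and that the level shifts between $\alpha$ and $\alpha+1$ do not break this. The subtle point is that $h_{\alpha+1}(A)$ can have the same length as $A$ (when $A$ already lies in $\Worms_{\alpha+1}$), so the strict decrease cannot come from the head alone—it must come from the fact that once we are comparing heads at level $\alpha+1$ and bodies at level $\alpha$, at least one of the two arguments in each recursive call is a strictly shorter body. I would organize the induction so that the measure genuinely drops in every branch, most likely by inducting on $\len{A}+\len{B}$ while treating the head comparison at the higher level $\alpha+1$ as the primary recursive call and the body comparisons as secondary calls on strictly shorter worms, exactly as the conditions of Lemma \ref{lemma:recursionBody} are set up to allow.
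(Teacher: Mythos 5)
Your overall strategy --- induction on $\len{A}+\len{B}$, base cases via Lemma \ref{lemma:topIsMinimal}, and an inductive step assembled from the items of Lemma \ref{lemma:recursionBody} --- is the same as the paper's, and your case analysis (heads strictly comparable versus heads equivalent, with body comparisons feeding items (1)--(4)) is logically sound. But there is a genuine gap exactly at the point you flag as delicate and then wave away: the recursive call comparing $h_{\alpha+1}(A)$ with $h_{\alpha+1}(B)$ need not decrease the measure. If $\alpha$ does not actually occur in either worm --- that is, if $A, B \in \Worms_{\alpha+1}$ --- then $h_{\alpha+1}(A) = A$ and $h_{\alpha+1}(B) = B$, so this ``recursive call'' is on the very same pair with the very same total length, and the induction does not terminate. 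Your proposed escape, that ``at least one of the two arguments in each recursive call is a strictly shorter body,'' is false for the head-comparison call, which involves no bodies at all.

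The paper's fix is to let $\xi$ be the \emph{minimum modality actually occurring in $AB$} (so $\alpha \leq \xi$) and to run the entire decomposition at level $\xi$ rather than at level $\alpha$: compare $b_\xi(A)$ and $b_\xi(B)$ against $B$ and $A$, and compare $h_{\xi+1}(A)$ with $h_{\xi+1}(B)$ at level $\xi+1$. Because $\xi$ genuinely occurs in $AB$, passing to the $(\xi+1)$-heads strips at least one modality from at least one of the two worms, so $\len{h_{\xi+1}(A)\, h_{\xi+1}(B)} < \len{AB}$ and the induction hypothesis applies --- note that the hypothesis must be quantified over all levels $\beta$, not just the fixed $\alpha$, which your formulation would also need to make explicit. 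One then concludes trichotomy for $<_\xi$ and transfers it down to $<_\alpha$ at the very end, since $C <_\xi D$ implies $C <_\alpha D$ for $\alpha \leq \xi$ by Monotonicity. Without this device the induction is not well-founded, and no reorganization carried out entirely at the fixed level $\alpha$ will make it so.
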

\begin{proof}
  We proceed by induction on the length of $A B$. If the length is zero, \emph{i.e.}, if $A = B = \top$, then clearly $A \equiv B$.
        
  Note that by Lemma \ref{lemma:topIsMinimal}, $\top <_\alpha C$ regardless of the worm $C \in \Worms_\alpha$, as long as $C \neq \top$. Then if exactly one of $A, B$ is $\top$ we have also solved our problem.

  Now for the induction step, take both $A$ and $B$ with positive length. Our induction hypothesis is:
  \begin{quote}
    For any ordinal $\beta$ and worms $C, D \in \Worms_\beta$ such that $\len{C D} < \len{A B}$, we have $C <_\beta D$, or $C \equiv D$, or $D <_\beta C$.
  \end{quote}

  Let $\xi$ be the minimum ordinal in $A B$, which means that $\alpha \leq \xi$. According to Lemma \ref{lemma:recursionBody}, if $A \leq_\xi b_\xi(B)$ or $B \leq_\xi b_\xi(A)$, we can conclude $A <_\xi B$ or $B <_\xi A$, respectively. Assume then that $A \not\leq_\xi b_\xi(B)$ and $B \not\leq_\xi b_\xi(A)$. Since we took $A \neq \top$, it is clear that $\len{b_\xi(A)} < \len{A}$, and analogously for $B$. Then by the induction hypothesis, we have $b_\xi(B) <_\xi A$ and $b_\xi(A) <_\xi B$.
                        
  Since we are assuming $\xi$ is in $A B$, we also know that
    \begin{equation*}
      \len{h_{\xi + 1}(A) h_{\xi + 1}(B)} < \len{A B}
      ,
    \end{equation*}
  and thus by the induction hypothesis we have
    \begin{gather*}
      h_{\xi + 1}(A) <_{\xi + 1} h_{\xi + 1}(B), \text{ or} \\
      h_{\xi + 1}(B) <_{\xi + 1} h_{\xi + 1}(A), \text{ or} \\
      h_{\xi + 1}(A) \equiv h_{\xi + 1}(B).
    \end{gather*}
  In the first two cases we use Lemma \ref{lemma:recursionBody} again to conclude $A <_\xi B$ or $B <_\xi A$ respectively. In the last case we can use the same lemma to get $A \equiv B$.

  As a final remark, we observe that since $\alpha \leq \xi$, we have that $C <_\xi D$ implies $C <_\alpha D$ for any worms $C, D \in \Worms_\xi$.  
\end{proof}

With the totality of $<_\alpha$ at hand, we can already show that proofs in $\RC$ and $\WC$ are equivalent for certain worms.

\begin{theorem}
\label{theorem:A_ltn_B_iff_A_ltnRC_B}
  If $A, B \in \Worms_\alpha$, then:
  \begin{align*}
    A <_\alpha B &\iff A <_\alpha^\RC B; \\
    A \equiv_\WC B &\iff A \equiv_\RC B.
  \end{align*}
\end{theorem}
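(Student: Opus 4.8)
The plan is to prove both biconditionals by leveraging Theorem \ref{thm:WCimpliesRC} for the forward-to-backward transfer and the Trichotomy Lemma (Lemma \ref{lemma:trichotomy}) to close the gap in the other direction. The key observation is that $<_\alpha$ and $<^\RC_\alpha$ are both total on $\Worms_\alpha$: the first by Lemma \ref{lemma:trichotomy} proved directly in $\WC$, and the second because the same lemma is asserted to hold in $\RC$ as well (and $<^\RC_\alpha$ is known to be irreflexive). Totality plus irreflexivity is exactly what turns the easy inclusion between the two orders into an equivalence, so I would first isolate that abstract principle and then apply it.

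First I would handle the direction $A <_\alpha B \implies A <_\alpha^\RC B$. Unfolding the definitions, $A <_\alpha B$ means $B \vdashw \alpha A$, and $A <^\RC_\alpha B$ means $B \vdashr \alpha A$. This implication is immediate from Theorem \ref{thm:WCimpliesRC}, which says that any $\WC$-derivation between worms is also an $\RC$-derivation: applying it to the worms $B$ and $\alpha A$ gives exactly what we want. The same argument gives the easy half of the second biconditional: if $A \equiv_\WC B$, meaning $A \vdashw B$ and $B \vdashw A$, then Theorem \ref{thm:WCimpliesRC} yields $A \vdashr B$ and $B \vdashr A$, so $A \equiv_\RC B$.

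The harder direction is the converse, $A <^\RC_\alpha B \implies A <_\alpha B$, and here is where totality does the real work. Suppose $A <^\RC_\alpha B$. By Lemma \ref{lemma:trichotomy} applied in $\WC$, exactly one of $A <_\alpha B$, $A \equiv_\WC B$, or $B <_\alpha A$ holds. I would rule out the latter two using the already-established forward direction together with the irreflexivity of $<^\RC_\alpha$. If $A \equiv_\WC B$ held, then by the forward direction (second biconditional, easy half) we would get $A \equiv_\RC B$; but combined with $A <^\RC_\alpha B$ and the transitivity of $<^\RC_\alpha$ this produces $A <^\RC_\alpha A$, contradicting irreflexivity. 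Similarly, if $B <_\alpha A$ held, the forward direction would give $B <^\RC_\alpha A$, which together with $A <^\RC_\alpha B$ again violates irreflexivity of $<^\RC_\alpha$. Hence the only surviving case is $A <_\alpha B$, as desired. The second biconditional's converse, $A \equiv_\RC B \implies A \equiv_\WC B$, follows by the same trichotomy-and-exclusion pattern: assuming $A \equiv_\RC B$, the alternatives $A <_\alpha B$ and $B <_\alpha A$ each transfer forward to a strict $\RC$-inequality that clashes with the $\RC$-equivalence by irreflexivity, leaving only $A \equiv_\WC B$.

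The main obstacle I anticipate is not any single computation but ensuring the two lemmas I lean on hold in $\RC$ with the properties I need: specifically, that Trichotomy (Lemma \ref{lemma:trichotomy}) and irreflexivity of $<^\RC_\alpha$ are genuinely available. The excerpt states that trichotomy has been shown for $<^\RC_\alpha$ in the literature and that irreflexivity of $<^\RC_\alpha$ is well-known, so I would cite those directly rather than reprove them; the whole argument then reduces to the clean logical schema ``total plus irreflexive orders that contain one another coincide.'' A minor point of care is that trichotomy gives membership in $\Worms_\alpha$, which is exactly the hypothesis of the theorem, so no side conditions are lost in the reduction.
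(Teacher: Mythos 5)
Your proposal is correct and follows essentially the same route as the paper: the left-to-right directions come from Theorem \ref{thm:WCimpliesRC}, and the converses are obtained by applying the Trichotomy Lemma \ref{lemma:trichotomy} in $\WC$ and eliminating the two unwanted alternatives via the embedding into $\RC$ together with irreflexivity (and transitivity) of $<^\RC_\alpha$. The only superfluous element is your appeal to totality of $<^\RC_\alpha$ itself, which the argument never actually uses.
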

\begin{proof}
  Both left-to-right implications are a consequence of $\RC$ extending $\WC$ (Theorem \ref{thm:WCimpliesRC}).

  For the right-to-left implication of the first statement, we reason as follows: suppose that $A <_\alpha^\RC B$ but it is not the case that $A <_\alpha B$. Then by the totality of $<_\alpha$ for worms in $\Worms_\alpha$ (Lemma \ref{lemma:trichotomy}), either $B <_\alpha A$, or $A \equiv_\WC B$. By the inclusion of $\WC$ in $\RC$, we then conclude that either $B <_\alpha^\RC A$, or $A \equiv_\RC B$. But neither of these two cases is possible, as they contradict the irreflexivity of $<_\alpha^\RC$. Then it must be the case that $A <_\alpha B$.
  The proof of the second statement is analogous.
\end{proof}

\section{Conservativity of \texorpdfstring{$\RC$}{RC} over \texorpdfstring{$\WC$}{WC}}

We are now ready to prove the main result of this paper.

\begin{theorem}\label{thm:RCimpliesWC}
For any worms $A$ and $B'$, if 
	$
		A\vdashr B',
	$
	then also
	$
		A\vdashw B'.
	$
\end{theorem}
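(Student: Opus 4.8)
The plan is to argue by induction on the length of the right-hand worm $B'$, with $A$ left arbitrary (so that the induction hypothesis may later be applied to the same $A$ but a shorter target). The base case is immediate: if $B' = \top$ then $A \vdashw \top$ is exactly Axiom~A1. So I would assume $B' = \alpha B$ for some ordinal $\alpha$ and worm $B$, and take as induction hypothesis that $C \vdashr D$ implies $C \vdashw D$ for every worm $C$ and every worm $D$ with $\len{D} < \len{B'}$.

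The engine of the inductive step is Lemma~\ref{lemma:RecursiveWormImplication}: from the $\RC$-hypothesis $A \vdashr \alpha B$ it extracts the two statements $h_\alpha(A) \vdashr \alpha h_\alpha(B)$ and $A \vdashr r_\alpha(B)$. The goal is then to promote both of these to $\WC$ and recombine them using Lemma~\ref{lemma:RecursiveWormImplicationWC} to obtain $A \vdashw \alpha B$, which is what we want.

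The two statements must be handled by entirely different means, and seeing why is the crux of the argument. The remainder statement $A \vdashr r_\alpha(B)$ has a right-hand side of length $\len{r_\alpha(B)} \le \len{B} < \len{\alpha B} = \len{B'}$, so it falls directly under the induction hypothesis and gives $A \vdashw r_\alpha(B)$. The head statement, by contrast, need not decrease in length at all (when $\alpha$ is the minimal modality occurring, the $\alpha$-heads are essentially the whole worms), so an appeal to the induction hypothesis would be useless here. Instead I would read $h_\alpha(A) \vdashr \alpha h_\alpha(B)$ as the order statement $h_\alpha(B) <_\alpha^\RC h_\alpha(A)$ and note that $h_\alpha(A), h_\alpha(B) \in \Worms_\alpha$; Theorem~\ref{theorem:A_ltn_B_iff_A_ltnRC_B} then converts this $\RC$-order fact into the corresponding $\WC$-order fact $h_\alpha(B) <_\alpha h_\alpha(A)$, i.e.\ $h_\alpha(A) \vdashw \alpha h_\alpha(B)$, with no recourse to the induction. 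Feeding both $\WC$-facts into Lemma~\ref{lemma:RecursiveWormImplicationWC} closes the step.

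The main obstacle is precisely the non-reducing head comparison flagged by the authors after Lemma~\ref{lemma:RecursiveWormImplicationWC}: a naive induction on length stalls there. The resolution is that all the genuine difficulty has already been front-loaded into the totality of $<_\alpha$ (Lemma~\ref{lemma:trichotomy}) and its consequence Theorem~\ref{theorem:A_ltn_B_iff_A_ltnRC_B}, which allow the head part to be discharged outright rather than recursively. Once that is available, the theorem itself is a short inductive bookkeeping argument, and the only points that need care are verifying that $r_\alpha(B)$ strictly shortens $B'$ and that the hypotheses of Lemma~\ref{lemma:RecursiveWormImplicationWC} are met verbatim.
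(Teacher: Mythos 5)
Your proof is correct, and it uses exactly the same toolbox as the paper --- Lemma~\ref{lemma:RecursiveWormImplication} to decompose the $\RC$-hypothesis, Theorem~\ref{theorem:A_ltn_B_iff_A_ltnRC_B} to transfer order facts between the calculi, and Lemma~\ref{lemma:RecursiveWormImplicationWC} to reassemble --- but it routes through them differently. The paper inducts on the combined length $\len{A\alpha B}$ and splits into two cases: when at least one $\alpha$-remainder is nontrivial, \emph{both} sub-statements (head and remainder) drop in combined length and are handled by the induction hypothesis; only in the degenerate case $r_\alpha(A)=r_\alpha(B)=\top$, where no length decrease is available, does the paper fall back on Theorem~\ref{theorem:A_ltn_B_iff_A_ltnRC_B}. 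You instead induct only on $\len{B'}$ and observe that the head comparison $h_\alpha(A)\vdashr \alpha h_\alpha(B)$ is \emph{always} an order statement between members of $\Worms_\alpha$, so Theorem~\ref{theorem:A_ltn_B_iff_A_ltnRC_B} discharges it uniformly, while the remainder statement always strictly shortens the target and so always falls to the induction hypothesis. This eliminates the case distinction and uses a simpler induction measure; the price is that you lean on the (already established) trichotomy machinery in every step of the recursion rather than only at the leaves, which slightly obscures the fact --- emphasized by the paper's presentation --- that the purely syntactic reduction of Lemmas~\ref{lemma:RecursiveWormImplication} and~\ref{lemma:RecursiveWormImplicationWC} carries the argument except on worms lying entirely in $\Worms_\alpha$. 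Both arguments are sound; yours is a legitimate streamlining.
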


\begin{proof}
	If $B' = \top$, the result is immediate. Assume then that $B' = \alpha B$ for some ordinal $\alpha$ and worm $B$.
	
	The proof proceeds by complete induction on the length of $A \alpha B$. The minimum length of $A \alpha B$ is 1, and it occurs only when $A = B = \top$. But the premise $\top \vdashr \alpha$ is absurd since it would contradict the irreflexivity of $<^\RC_\alpha$, and hence there is nothing left to prove.
	
	For the induction step, note that our induction hypothesis is the following:
  \begin{quote}
    For any worms $C, D$ such that $\len{C D} < \len{A \alpha B}$ and $C \vdashr D$, we have that $C \vdashw D$.
  \end{quote}
	Assume that $A \vdashr \alpha B$. From Lemma \ref{lemma:RecursiveWormImplication} we get $h_\alpha(A) \vdashr \alpha h_\alpha(B)$ and $A \vdashr r_\alpha(B)$.
	Consider the following cases:
	\begin{enumerate}
		\item $r_\alpha(A) \neq \top$ or $r_\alpha(B) \neq \top$.
		
			Then, since $\len{A} = \len{h_\alpha(A)} + \len{r_\alpha(A)}$ (and equivalently for $B$), we know that $\len{h_\alpha(A)} < \len{A}$ or $\len{h_\alpha(B)} < \len{B}$. Then
				$\len{h_\alpha(A) \alpha h_\alpha(B)} < \len{A \alpha B}.$
			Furthermore, $\len{A r_\alpha(B)} < \len{A \alpha B}$. By using the induction hypothesis twice we get $h_\alpha(A) \vdashw \alpha h_\alpha(B)$ and $A \vdashw r_\alpha(B)$, which is enough to show $A \vdashw \alpha B$ by Lemma \ref{lemma:RecursiveWormImplicationWC}.
			
		\item $r_\alpha(A) = \top$ and $r_\alpha(B) = \top$.
		
      In this case, we know that $A, B \in \Worms_\alpha$, and hence that $A \vdash_\WC \alpha B$ by Theorem \ref{theorem:A_ltn_B_iff_A_ltnRC_B}.
	\end{enumerate}
\end{proof}

The proof of the preceding result (together with the proofs of the results it uses) gives us a constructive algorithm to decide whether $A \vdashw B$. Furthermore, if indeed $A \vdashw B$, this algorithm provides a list of syntactical steps which form a formal proof. Since at each iteration of the recursion we may need to decide several different statements with only slightly smaller lengths, the algorithm is exponential. It is known that there is a polynomial procedure to decide $\RC$ \cite{Dashkov2012} (and hence, as we've seen, $\WC$), but it uses semantics. Finding a polynomial syntactical algorithm remains an open problem.

Combining Theorems \ref{thm:WCimpliesRC} and \ref{thm:RCimpliesWC}, we obtain the promised result: $\RC$ is a conservative extension of $\WC$.

\begin{theorem}
For any worms $A$ and $B$ we have that
	$
		A\vdash_\RC B
	$
	if and only if
	$
		A\vdash_\WC B
	$.
\end{theorem}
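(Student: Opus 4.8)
The plan is to obtain the final biconditional by simply combining the two directional theorems already established, since the statement is precisely the conjunction of their conclusions restricted to worms. First I would observe that the right-to-left direction, namely that $A \vdashw B$ implies $A \vdashr B$, is exactly Theorem \ref{thm:WCimpliesRC}; no further work is needed there beyond invoking it. The content of that theorem is that every axiom and rule of $\WC$ remains sound when reinterpreted in $\RC$, so any $\WC$-derivation transfers to an $\RC$-derivation between the same worms.

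For the left-to-right direction, that $A \vdashr B$ implies $A \vdashw B$, I would invoke Theorem \ref{thm:RCimpliesWC}, which is the substantive half of the argument. I would note that Theorem \ref{thm:RCimpliesWC} is stated for an arbitrary worm $B'$ on the right-hand side, so it applies verbatim to any worm $B$; there is no restriction to worms in some $\Worms_\alpha$ or to non-trivial worms, and the degenerate case $B = \top$ is handled at the outset of that theorem's proof.

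The proof itself is therefore a two-line assembly: given two worms $A$ and $B$, if $A \vdashr B$ then $A \vdashw B$ by Theorem \ref{thm:RCimpliesWC}, and conversely if $A \vdashw B$ then $A \vdashr B$ by Theorem \ref{thm:WCimpliesRC}; hence $A \vdashr B$ if and only if $A \vdashw B$. I would then add a sentence of interpretation, explaining that this equivalence is exactly the assertion that $\RC$ is a conservative extension of $\WC$: the two calculi prove the same sequents between worms, so passing to the richer language of $\RC$ (with conjunction and arbitrary closed formulas) yields nothing new about worms.

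There is no genuine obstacle at this final step, as all the difficulty has been absorbed into Theorem \ref{thm:RCimpliesWC} and the totality and irreflexivity results feeding it (Lemma \ref{lemma:trichotomy} and Theorem \ref{theorem:A_ltn_B_iff_A_ltnRC_B}). The only point demanding a moment of care is purely a matter of bookkeeping: confirming that the quantifier scopes match, i.e.\ that both theorems range over \emph{all} worms with no side conditions that would leave a case of the biconditional unproven. Since both cited theorems are stated with universal quantification over worms and cover the trivial case $\top$, the match is exact and the combination is immediate.
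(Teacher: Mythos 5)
Your proposal is correct and matches the paper exactly: the paper also obtains this theorem immediately by combining Theorem \ref{thm:WCimpliesRC} (the $\WC\Rightarrow\RC$ direction) with Theorem \ref{thm:RCimpliesWC} (the conservativity direction), with no further argument needed. Your bookkeeping remark about the quantifier scopes and the $B=\top$ case is accurate and does not reveal any gap.
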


Combining this theorem with Lemma \ref{theorem:eachClosedFormulaIsAworm} we obtain the following corollary.

\begin{corollary}
Let $\varphi$ and $\psi$ be closed $\RC$-formulas with corresponding worms $A$ and $B$ such that $\varphi \equiv_\RC A$ and $\psi \equiv_\RC B$. Then we have $
		\varphi \vdashr \psi$ if and only if $A\vdashw B$.
\end{corollary}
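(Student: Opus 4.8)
The plan is to reduce the corollary to the conservativity theorem just established (that $A \vdash_\RC B$ iff $A \vdash_\WC B$ for worms) together with the observation that, within $\RC$, an implication between formulas is insensitive to replacing either side by an $\RC$-equivalent formula. Since $A$ and $B$ are supplied by hypothesis (their existence being guaranteed in general by Lemma~\ref{theorem:eachClosedFormulaIsAworm}), the only real content is a short chain of equivalences, so I expect no genuine obstacle here.

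First I would unfold the meaning of $\varphi \equiv_\RC A$, namely $\varphi \vdashr A$ and $A \vdashr \varphi$, and likewise $\psi \vdashr B$ and $B \vdashr \psi$. The key intermediate claim is that
\[
	\varphi \vdashr \psi \quad\text{if and only if}\quad A \vdashr B.
\]
For the forward direction, assuming $\varphi \vdashr \psi$, I would compose $A \vdashr \varphi$, then $\varphi \vdashr \psi$, then $\psi \vdashr B$ using the Cut rule of $\RC$ (twice) to obtain $A \vdashr B$. The backward direction is symmetric: from $A \vdashr B$ I would compose $\varphi \vdashr A$, $A \vdashr B$, and $B \vdashr \psi$ to get $\varphi \vdashr \psi$.

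Finally I would invoke the conservativity theorem immediately preceding this corollary, which states that for worms $A$ and $B$ we have $A \vdashr B$ if and only if $A \vdashw B$. Chaining this with the equivalence from the previous paragraph yields
\[
	\varphi \vdashr \psi \iff A \vdashr B \iff A \vdashw B,
\]
which is exactly the desired statement. The entire argument is a routine application of Cut together with the already-proved conservativity result, so there is no step I anticipate as a serious difficulty; the verification that Cut suffices to transport implications across $\RC$-equivalence is the only point requiring any care.
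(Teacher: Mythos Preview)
Your proposal is correct and is precisely the routine argument the paper intends: the corollary is stated without proof, being derived immediately from the conservativity theorem together with the closure of $\vdashr$ under replacement of $\RC$-equivalents (which you correctly justify via Cut). There is nothing to add.
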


\section{Relational semantics and Ignatiev's model}

Let us briefly recall how we arrived at the calculus $\WC$ while we comment on the relational semantics for the intermediate steps. Japaridze went from the regular provability logic $\gl$ to its polymodal version $\glp$. Whereas $\gl$ is frame-complete, it turned out that $\glp$ is frame incomplete. Ignatiev intensively studied the closed fragment $\glp^0$ and --- although the frame incompleteness is still salient --- introduced a universal model $\mathcal{I}$ for it. Ignatiev's model $\mathcal{I}$ is essentially infinite, having fractal features.

Dashkov and Beklemishev studied reflection calculi and in particular the strictly positive fragments $\RC$ and $\RCzero$ of $\glp$ and $\glp^0$, respectively. Here the only connectives are the diamond modalities together with conjunctions. The reflection calculi are known to be frame complete and have the finite model property. Furthermore, linear frames ($xRy$ and $xRz$ imply $y=z$, $yRz$ or $zRy$) suffice for the closed fragment \cite{FernandezDuque2017}.

In this paper we perform a final simplification on $\RC^0$, getting rid of the conjunctions to end up with $\WC$. Inspired by the finite model property of $\RC^0$ and whence of $\WC$, in the last section of this paper we question whether $\WC$ may have a universal model $\mathcal{U}$ that is significantly simpler than Ignatiev's model $\mathcal{I}$. We settle the answer to this question with a yes and a no. 

Yes, $\mathcal U$ can be simpler in that we can bound the length of the strict chains of successors in $\mathcal U$ by $\omega$. For this, it suffices to take the disjoint union of all finite $\RC^0$ counter models for all statements for which $A\nvdash B$. This clearly defines a universal model for $\RC^0$ with only finite strict $R_\alpha$-chains whereas, as we shall see below, the model $\mathcal I$ has arbitrarily long strict $R_\alpha$-chains. On the other hand, we shall see that for a large class of universal models $\mathcal U$, they inherit much of the intrinsic complexity of $\mathcal I$ in that for infinitely many essentially different points $x\in \mathcal I$ we can find corresponding points $y\in \mathcal U$ such that $x$ and $y$ have the same modal theory.

Before we can make this statement precise, we need a couple of technical definitions that allow us to describe Ignatiev's model $\mathcal I$. As a first step, we need to define the \emph{end-logarithm} $\ell$ as a function from the ordinals to the ordinals by stipulating $\ell(0):=0$ and $\ell(\alpha + \omega^\beta) := \beta$. 
Next, we need to define iterates of $\ell$  --- the \emph{hyper-logarithms} --- and write $\ell^\xi$ to denote the \emph{$\xi$-th iterate of $\ell$}. We define: 
\begin{quote}
\begin{enumerate}
\item
$\ell^0 := {\sf id}$, 

\item
$\ell^1 := \ell$ and,

\item
$\ell^{\alpha + \beta} : = \ell^\beta \circ \ell^\alpha$.
\end{enumerate} 
\end{quote}

Clearly, these three properties do not tell us anything about $\ell^\xi$ for an additively indecomposable $\xi$. To fix that, we will use the notion of \emph{initial function}. An initial function on the ordinals is a function that maps initial segments $[0, \ldots, \alpha]$ onto initial segments $[0, \ldots, \beta]$. 

We now further require that each $\ell^\xi$ is point-wise maximal among all families  of initial ordinal functions $\{ f^\xi\}_{\xi \in {\sf On}}$ that satisfy the three properties. In this way, clearly each $f^\xi$ defines an initial function. For the purposes of this paper, many of the exact details of the $\ell^\xi$ functions are irrelevant and we refer the interested reader to \cite{FernandezDuqueJJJ2013} or \cite{FernandezDuqueJJJ2013_hyperations} for further details.

Let us, by way of example, compute the first couple of values of $\ell^\omega$. Recall that as always, $\varepsilon_\zeta$ denotes the $\zeta$-th fixpoint of $x\mapsto \omega^x$. Since the initial segment $[0]$ should be mapped onto an initial segment, it must be that $\ell^\omega(0) =0$. If $\alpha< \varepsilon_0$ then it is easy to see that for some $n<\omega$ we have that $\ell^n(\alpha) = 0$. Consequently, $\ell^\omega(\alpha) = \ell^{n+\omega}(\alpha) = \ell^{\omega} \circ \ell^{n}(\alpha) = \ell^{\omega} (0) = 0$.
Consequently, each initial segment $[0,\ldots, \alpha]$ for $\alpha<\varepsilon_0$ will be mapped by $\ell^\omega$ to the initial segment $[0]$.

What about $\ell^\omega (\varepsilon_0)$? If we disregard the requirement on initiality, it is not hard to see that $\ell^\omega (\varepsilon_0)$ could be \emph{any} value. However, since $\ell^\omega$ should map the initial segment $[0, \ldots, \varepsilon_0]$ to an initial segment, the maximal possible value doing so requires that $\ell^\omega(\varepsilon_0) = 1$. 

We observe that $\ell^\omega(\xi + \zeta) = \ell^{1+\omega}(\xi + \zeta)= \ell^\omega\circ \ell(\xi + \zeta) =\ell^\omega\circ \ell(\zeta) = \ell^{1+\omega}(\zeta) = \ell^\omega(\zeta)$ so that $\ell^\omega (\varepsilon_0 + \alpha) = 0$ for any $\alpha<\varepsilon_0$. 

Following this kind of arguments, we can now see that the next value where $\ell^\omega$ increases will be at $\varepsilon_1$ and $\ell^\omega (\varepsilon_1) = 2$. 
Fortunately, we do not need to prove tons of theorems any time we are required to know some value of $\ell^\alpha(\beta)$ and in \cite{FernandezDuqueJJJ2013_hyperations,FernandezDuqueJJJ2013} a recursive algorithm is presented to compute these values:

\begin{proposition}
For ordinals $\xi,\zeta$, the following recursion is well-defined and determines all the $\ell^\xi(\zeta)$ values:
\begin{enumerate}
  \item $\ell^0(\alpha)=\alpha$,
  \item $\ell^\xi(0)=0$,
  \item $\ell^1(\alpha+\omega^\beta)=\beta$,
  \item $\ell^{\omega^\rho+\xi}=\ell^{\xi} \circ \ell^{\omega^\rho}$ provided that $\xi<\omega^\rho+\xi$,
  \item $\ell^{\omega^\rho}(\zeta) = \ell^{\omega^\rho}(\ell^\eta(\zeta))$ if $\rho>0$ and $\eta<\omega^\rho$ is such that $\ell^\eta(\zeta)<\zeta$,
  \item $\ell^{\omega^\rho}(\zeta)=\displaystyle\sup_{\delta\in [0,\zeta)}(\ell^{\omega^\rho}(\delta)+1)$ if $\rho>0$ and $\zeta=\ell^\eta(\zeta)$ for all $\eta<\omega^\rho$.
\end{enumerate}
\end{proposition}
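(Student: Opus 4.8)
The plan is to read the six clauses as a transfinite recursion on the pair $(\xi,\zeta)$ made of the superscript and the argument, and to establish two things separately. First, that the recursion is \emph{well-defined}: the clauses are exhaustive and mutually exclusive, and every recursive call is made on a strictly simpler input, so that unfolding the clauses always terminates in the base cases $1$--$3$. Second, that the total function so computed \emph{agrees with} the maximal family $\{\ell^\xi\}$ singled out by the three defining properties together with initiality. Throughout I would use the basic fact that every initial function $f$ is non-decreasing and contracting, i.e.\ $f(\alpha)\le\alpha$: from $f[[0,\alpha+1]]=f[[0,\alpha]]\cup\{f(\alpha+1)\}$ being an initial segment one sees that $f$ increases by at most $1$ at successors, and a transfinite induction (with the limit step handled by the initiality constraint) yields $f(\alpha)\le\alpha$.

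For termination I would order pairs of ordinals lexicographically, comparing first the superscript and then the argument; this order is well founded. Clauses $1$--$3$ are base cases. In clause $4$ the superscript $\omega^\rho+\xi$ is additively decomposable, so we may take $\omega^\rho$ to be its leading Cantor term and $\xi>0$ the remainder; then $\omega^\rho<\omega^\rho+\xi$ (from $\xi>0$) and $\xi<\omega^\rho+\xi$ (the side condition), so both calls $\ell^{\omega^\rho}$ and $\ell^\xi$ drop the first coordinate. In clause $5$ the inner call $\ell^\eta$ has superscript $\eta<\omega^\rho$, so it drops the first coordinate, while the outer call keeps superscript $\omega^\rho$ but is evaluated at $\ell^\eta(\zeta)<\zeta$, dropping the second. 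In clause $6$ every call $\ell^{\omega^\rho}(\delta)$ has $\delta<\zeta$. Finally I would check exhaustiveness and exclusivity: for $\sigma,\zeta>0$, either $\sigma=1$ (clause $3$), or $\sigma$ is additively decomposable (clause $4$), or $\sigma=\omega^\rho$ with $\rho>0$, in which case clauses $5$ and $6$ split according to whether some $\eta<\omega^\rho$ has $\ell^\eta(\zeta)<\zeta$ or $\zeta$ is a common fixed point of all such $\ell^\eta$ --- these being complementary since each $\ell^\eta$ is contracting.

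For correctness I would argue clause by clause. Clauses $1$ and $3$ are the defining properties $\ell^0=\mathsf{id}$ and $\ell^1=\ell$, and clause $2$ is forced by initiality, since the image of the singleton $[0,0]$ must be an initial segment $[0,\ell^\xi(0)]$ with one element, whence $\ell^\xi(0)=0$. Clause $4$ is the instance $\alpha=\omega^\rho$, $\beta=\xi$ of the composition law $\ell^{\alpha+\beta}=\ell^\beta\circ\ell^\alpha$. Clause $5$ rests on additive absorption: as $\eta<\omega^\rho$ and $\omega^\rho$ is additively indecomposable we have $\eta+\omega^\rho=\omega^\rho$, so the composition law gives $\ell^{\omega^\rho}=\ell^{\omega^\rho}\circ\ell^\eta$ and hence $\ell^{\omega^\rho}(\zeta)=\ell^{\omega^\rho}(\ell^\eta(\zeta))$ for \emph{every} $\eta<\omega^\rho$; the hypothesis $\ell^\eta(\zeta)<\zeta$ is needed only to secure the measure decrease, not the identity.

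The substantive case, and the one I expect to be the main obstacle, is clause $6$, where $\zeta$ is a common fixed point of all $\ell^\eta$ with $\eta<\omega^\rho$: here the composition law is vacuous and the value is pinned down purely by initiality and maximality. Writing $f=\ell^{\omega^\rho}$ and $s:=\sup_{\delta<\zeta}(f(\delta)+1)$, the upper bound $f(\zeta)\le s$ is the clean direction: if $f(\zeta)>s$ then every $\delta<\zeta$ has $f(\delta)<s\le f(\zeta)$, so $s$ is missing from the image of $[0,\zeta]$, contradicting that this image is the initial segment $[0,f(\zeta)]$. The lower bound $f(\zeta)\ge s$ is where the real content lies: one must show the \emph{maximal} family actually attains $s$ at $(\omega^\rho,\zeta)$. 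I would do this by checking that assigning the value $s$ at $\zeta$ keeps $f\restriction[0,\zeta]$ surjective onto an initial segment (so initiality survives) and respects all instances of the composition law (vacuous at a common fixed point), whence maximality forces $\ell^{\omega^\rho}(\zeta)\ge s$. Making this precise --- verifying that raising the value at $\zeta$ to $s$ does not break initiality or the composition law at larger arguments and superscripts --- is the delicate bookkeeping, and it is exactly the content of the construction of the maximal hyperation, on which I would lean for the remaining details \cite{FernandezDuqueJJJ2013_hyperations,FernandezDuqueJJJ2013}.
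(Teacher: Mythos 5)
The paper does not prove this proposition at all --- it is imported verbatim from the cited hyperations papers, so there is no in-text argument to compare yours against, and your outline is already more than the paper offers. Judged on its own terms, the structure is the right one and matches how the result is established in those references: termination by lexicographic recursion on the pair (superscript, argument), with the correct observations that in clause~4 one should take $\omega^\rho$ to be the leading Cantor term so that the residue $\xi$ is strictly positive and both calls genuinely drop the superscript, that the dichotomy between clauses~5 and~6 is itself decided by recursive calls of smaller superscript, and that clause~5 is nothing but the absorption $\eta+\omega^\rho=\omega^\rho$ fed into the composition law (which also settles independence of the choice of $\eta$).

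Two caveats. First, with the definition of initial function given in the paper, your blanket claim that every initial function is non-decreasing is false: the map with $f(0)=0$, $f(1)=1$ and $f(\alpha)=0$ for $\alpha\geq 2$ sends every initial segment onto an initial segment but is not monotone. This is harmless, because you only ever need contractivity ($f(\alpha)\leq\alpha$, which does follow by the induction you sketch) and the fact that the image of $[0,\zeta]$ is an initial segment: in the clause-6 upper bound, replace ``the image is $[0,f(\zeta)]$'' by ``the image is an initial segment containing $f(\zeta)>s$, hence containing $s$,'' and the contradiction goes through without monotonicity. Second, and more substantively, the lower bound $\ell^{\omega^\rho}(\zeta)\geq\sup_{\delta<\zeta}(\ell^{\omega^\rho}(\delta)+1)$ in clause~6 is where the entire content of the hyperation construction is concentrated: one must either exhibit an admissible family attaining this value or show that the pointwise supremum of all admissible families is again an admissible family of initial functions satisfying the composition law. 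You correctly isolate this as the crux but then defer it wholesale to the references, so as a self-contained proof there is a genuine gap at exactly this point --- though it is no worse than what the paper itself does.
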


Now that the hyper-logarithms have been defined, we can specify the points of Ignatiev's model $\mathcal I$ which are the so-called \emph{$\ell$-sequences}.

\begin{definition}[$\ell$-sequence]
An \emph{$\ell$-sequence} is a function $f : {\sf On} \to {\sf On}$ such that for each ordinal $\zeta$ we have $f(\zeta) \leq \ell^{-\xi + \zeta}\big(f (\xi)\big)$ for $\xi<\zeta$ large enough.
\end{definition}

At times we shall write $f_\xi$ instead of $f(\xi)$. We note that for each $\ell$-sequence $f$ the inequality $f(\alpha +1)\leq \ell \big ( f(\alpha)\big)$ holds. Furthermore, the requirement of $\xi<\zeta$ being large enough is important, as it means that
	$
		f = \la \omega^{\varepsilon_0+1},\varepsilon_0, \varepsilon_0, \ldots 1, 0 \ldots \ra
	$
	is an $\ell$-sequence, where $f(0) = \omega^{\varepsilon_0 +1}$, $f(i) = \varepsilon_0$ for $0<i<\omega$, $f(\omega)=1$ and $f(i) = 0$ for $i>\omega$. It is easy to see that $\ell^\omega(\omega^{\varepsilon_0 + 1}) = 0$ and $\ell^\omega(\varepsilon_0) = 1$. Then $f(\omega) \leq \ell^\omega \big(f(1)\big)$ but it is not the case that $f(\omega) \leq \ell^\omega \big(f(0)\big)$.

We can now define the class-size version of Ignatiev's model as the collection of all $\ell$-sequences with suitable relations $R_\xi$ to model each of the $\la \xi \ra$ modalities. For all practical purposes we can take sufficiently large set-size truncations of the class-size model.

\begin{definition}[Ignatiev's model]
Ignatiev's model is $\mathcal I := \la I, \{R_\xi \}_{\xi \in {\sf On}}\ra$, where $I$ is the collection of all $\ell$-sequences and $f R_\xi g$ if both $f(\alpha) = g(\alpha)$ for $\alpha<\xi$ and $f(\xi) > g(\xi)$.
\end{definition}

For example, we can see that
\[
\begin{array}{llll}
 &\la \omega^{\varepsilon_0+1},\varepsilon_0, \varepsilon_0, \ldots 0, 0 \ldots \ra  &R_0 & \la {\varepsilon_0},\varepsilon_0, \varepsilon_0, \ldots 0, 0 \ldots \ra, \\
 &\la \omega^{\varepsilon_0+1},\varepsilon_0, \varepsilon_0, \ldots 1, 0 \ldots \ra  &R_0 & \la {\varepsilon_0},\varepsilon_0, \varepsilon_0, \ldots 1, 0 \ldots \ra, \\
 & \la \omega^{\varepsilon_0+1},\varepsilon_0, \varepsilon_0, \ldots 1, 0 \ldots \ra  &R_\omega & \la \omega^{\varepsilon_0+1},\varepsilon_0, \varepsilon_0, \ldots 0, 0 \ldots \ra, \\
\mbox{but \ \ \ \ \ \ } \neg \Big( &   \la \omega^{\varepsilon_0+1},\varepsilon_0, \varepsilon_0, \ldots 1, 0 \ldots \ra 
  &R_{\omega} & \la {\varepsilon_0},\varepsilon_0, \varepsilon_0, \ldots 0, 0 \ldots \ra \ \ \Big).\\

\end{array}
\]

The relation $\mathcal I, x\Vdash \varphi$ is defined as usual, where we omit the mention of the model $\mathcal I$:
	$x\Vdash \top$;
	$x\Vdash \varphi \wedge \psi :\Leftrightarrow x\Vdash \varphi$ and $x\Vdash \psi$;
	$x\Vdash \neg \varphi :\Leftrightarrow x \not\Vdash \varphi$;
	and finally, $x\Vdash \la \xi\ra \varphi :\Leftrightarrow$ there is a $y$ s.t.\ $xR_\xi y$ and $y \Vdash \varphi$. 

\begin{theorem}[\cite{FernandezDuqueJJJ2013}]
$\glp^0$ is sound and complete with respect to Ignatiev's model, that is
	$
		\glp^0 \vdash \varphi
	$
	if and only if
	$
		\forall x \in \mathcal I \ x \Vdash \varphi.
	$
\end{theorem}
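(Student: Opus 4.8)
The plan is to prove the two directions separately: soundness (left to right) by a routine induction on $\glp^0$-derivations, and completeness (right to left) by constructing, for each worm, a point of $\mathcal I$ that realizes exactly its $\glp^0$-theory. For soundness I would check that every axiom of $\glp^0$ is valid in $\mathcal I$ and that the rules (modus ponens and the necessitations $[\xi]$) preserve validity. The only modal axioms to inspect are L\"ob's axiom, the monotonicity axiom $\la\xi\ra\varphi \to \la\eta\ra\varphi$ for $\eta<\xi$, and the stability axiom $\la\eta\ra\varphi \to [\xi]\la\eta\ra\varphi$ for $\eta<\xi$. L\"ob's axiom holds because each $R_\xi$ is transitive and converse well-founded: a step $f R_\xi g$ fixes all coordinates below $\xi$ and strictly decreases the ordinal $f(\xi)$, so transitivity is immediate and there can be no infinite $R_\xi$-ascending chain. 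Stability is essentially bookkeeping: if $f R_\xi g$ then $f$ and $g$ agree on every coordinate $<\xi$, and whether a point has an $R_\eta$-successor forcing $\varphi$ depends only on its coordinates $\le \eta < \xi$; hence $\la\eta\ra\varphi$ is invariant along $R_\xi$.

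The delicate axiom, already on the soundness side, is monotonicity. Given $f R_\xi g$ with $g \Vdash \varphi$ I must produce $h$ with $f R_\eta h$ and $h \Vdash \varphi$. The natural candidate keeps the coordinates of $f$ below $\eta$, strictly lowers the $\eta$-th coordinate, and then continues with a tail read off from $g$; the point that must be verified is that this $h$ is again an $\ell$-sequence, i.e.\ that the defining inequality $h(\zeta) \le \ell^{-\eta + \zeta}(h(\eta))$ survives. This is exactly where the hyper-logarithms enter, through the recursion of the Proposition above, and it is the one genuinely computational part of soundness.

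Completeness is the main obstacle, and I would attack it by embedding worms into $\mathcal I$ by means of their \emph{o-values}. By Trichotomy (Lemma \ref{lemma:trichotomy}) the relation $<_\xi$ well-orders the worms of $\Worms_\xi$ modulo $\equiv$, so for each worm $A$ and each ordinal $\xi$ one can set $f_A(\xi)$ to be the order type of $\{\, B \in \Worms_\xi : B <_\xi h_\xi(A) \,\}$. Two things then have to be proved. First, that $f_A$ is genuinely an $\ell$-sequence: the required bound $f_A(\zeta) \le \ell^{-\xi+\zeta}(f_A(\xi))$ must be matched against how the o-values at different levels of a worm relate, which is read off from the head/body decomposition (Lemmas \ref{lemma:HeadAndRemainder} and \ref{lemma:body}) together with the hyper-logarithmic recursion. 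Second, a truth lemma: $\mathcal I, f_A \Vdash B$ iff $A \vdashw B$ for every worm $B$, proved by induction on $B$, the modal step amounting to the equivalence ``$f_A R_\xi g$ holds for some $g \Vdash C$ iff $A \vdashw \xi C$'', i.e.\ iff $C <_\xi A$. With the truth lemma in hand, any $\glp^0$-consistent closed $\varphi$ can be put, provably in $\glp^0$, into a Boolean combination of worms (via Lemma \ref{theorem:eachClosedFormulaIsAworm} and the conservativity of Section 6) and is then satisfied at a suitable $f_A$, yielding completeness; soundness supplies the converse.

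The hard part, as indicated, is showing that the o-value map actually lands in $\mathcal I$ and respects the relations $R_\xi$ --- in other words, that the order-theoretic arithmetic of worms under the family $\{<_\xi\}_\xi$ coincides with the ordinal arithmetic of the hyper-logarithms $\ell^\xi$. Everything else is induction and bookkeeping; this coincidence is the content that makes Ignatiev's model work, and it is where I would expect to spend essentially all of the effort, leaning on the fine structure of the $\ell^\xi$ developed in \cite{FernandezDuqueJJJ2013_hyperations}.
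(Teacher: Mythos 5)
The paper does not prove this theorem; it is imported wholesale from \cite{FernandezDuqueJJJ2013}, so there is no in-paper argument to compare yours against. On its own terms, your outline follows the general shape of the proof in the cited literature --- soundness by checking axioms with monotonicity as the delicate case, completeness via order-theoretic invariants of worms matched against the hyper-logarithms --- and you correctly locate where the effort concentrates. But three steps are genuinely missing rather than merely deferred. First, Trichotomy (Lemma \ref{lemma:trichotomy}) gives only that $<_\xi$ is \emph{total} on $\Worms_\xi$ modulo equivalence; for your o-values $f_A(\xi)$ to be ordinals you need $<_\xi$ to be \emph{well-founded}, which is a separate and substantial theorem (see \cite{FernandezDuqueJJJ2014,BeklemishevFernandezDuqueJJJ2014}) that nothing in this paper supplies. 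Second, the last step of your completeness argument puts an arbitrary $\glp^0$-consistent closed formula into a Boolean combination of worms. Lemma \ref{theorem:eachClosedFormulaIsAworm} covers only strictly positive formulas (no negations, no boxes), and the conservativity result of Section 6 relates $\RC$ to $\WC$; neither yields the normal form you invoke. That normal form is Ignatiev's theorem, which is of essentially the same depth as the statement being proved, so as written the argument is close to circular at this point.

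Third, even granting the normal form, your truth lemma describes the theories of the points $f_A$ only, and by Lemma \ref{lemma:mainAxisElementsAndWorms} these are exactly the main-axis points. Ignatiev's model contains points off the main axis --- the paper's own example $\la \omega^{\varepsilon_0+1},\varepsilon_0, \varepsilon_0, \ldots, 1, 0, \ldots \ra$ is one --- precisely because the main axis does not exhaust the realizable closed theories; hence some consistent Boolean combinations of worms and negated worms are satisfied only at such points, and completeness cannot be discharged at the $f_A$ alone. A smaller but real issue on the soundness side: for the monotonicity axiom your candidate witness $h$, obtained by splicing an initial segment of $f$ with a tail read off from $g$, must strictly lower coordinate $\eta$, where $f$ and $g$ agree; it is not clear either that the result is an $\ell$-sequence or that it still forces an arbitrary closed $\varphi$ (which may contain negations, so truth is not monotone under lowering coordinates). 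You flag this as the computational part, which is fair, but the construction as described does not yet work.
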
 

We now define an important subset of $\mathcal I$ that has rather nice properties.

\begin{definition}[Main axis, $\sf MA$]
  By $\sf MA$ we denote the \emph{main axis} of Ignatiev's model $\mathcal I$ and define it as such: $f\in {\sf MA} :\Leftrightarrow \forall \zeta \  \forall \, \xi {<} \zeta \ f(\zeta) = \ell^{-\xi + \zeta}(f(\xi))$.
\end{definition} 

For example, $\la \omega^{\varepsilon_0+1},\varepsilon_0, \varepsilon_0, \ldots 1, 0 \ldots \ra$ is not on the main axis, whereas 
$\la \omega^{\varepsilon_0+1},\varepsilon_0+1, 0 \ldots 0, 0 \ldots \ra$ is.
One of the nice properties of the main axis is that each point on it is modally definable. We refer the reader to \cite{FernandezDuqueJJJ2013} for a proof of the following.

\begin{lemma}\label{lemma:mainAxisElementsAndWorms}
For each $x \in {\sf MA}$ there is a worm $A$ such that $\mathcal I ,y \Vdash A \land [0] \neg A$ if and only if $x=y$.
Moreover, each worm $A$ defines a point at the main axis via $A \wedge [0]\neg A$.
\end{lemma}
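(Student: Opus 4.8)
The plan is to reduce everything to a single semantic fact about Ignatiev's model, namely a \emph{domination} characterisation of worm satisfaction. For a worm $A$, let $o(A)$ be the ordinal assigned to $A$ by the order isomorphism between worms modulo $\equiv_\WC$ and the ordinals, which exists by the trichotomy of $<_0$ (Lemma \ref{lemma:trichotomy}) together with the well-foundedness of $<_0^\RC$ transported across Theorem \ref{theorem:A_ltn_B_iff_A_ltnRC_B}. Associate to $A$ the main-axis sequence $x_A$ defined by $x_A(\xi) := \ell^\xi(o(A))$; by construction $x_A \in {\sf MA}$. The engine of the proof is the claim, established in \cite{FernandezDuqueJJJ2013}, that
\[
  \mathcal I, f \Vdash A \iff f(\xi) \geq x_A(\xi) \text{ for all } \xi,
\]
i.e.\ $f$ forces $A$ exactly when $f$ dominates $x_A$ coordinatewise. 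Granting this, the whole lemma becomes bookkeeping on coordinates.

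First I would prove the core uniqueness statement: if $\mathcal I, y \Vdash A \wedge [0]\neg A$ then $y = x_A$. Since $y \Vdash A$, domination gives $y(\xi) \geq x_A(\xi)$ for every $\xi$; in particular $y(0) \geq x_A(0)$. For the reverse inequality at $0$, note that $y \Vdash [0]\neg A$ means no $R_0$-successor of $y$ forces $A$; were $y(0) > x_A(0)$, the point $x_A$ itself would be an $R_0$-successor of $y$ (as $x_A(0) < y(0)$) forcing $A$, a contradiction. Hence $y(0) = x_A(0)$. I then push this equality up all coordinates by transfinite induction on $\xi$: assuming $y(\eta) = x_A(\eta)$ for all $\eta < \xi$, the $\ell$-sequence condition gives $y(\xi) \leq \ell^{-\eta+\xi}(y(\eta)) = \ell^{-\eta+\xi}(x_A(\eta)) = x_A(\xi)$ for $\eta < \xi$ large enough, the last equality being the main-axis condition on $x_A$; together with $y(\xi) \geq x_A(\xi)$ from domination this yields $y(\xi) = x_A(\xi)$. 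Thus $y = x_A$. Conversely $x_A \Vdash A$ (as $x_A$ trivially dominates itself) and $x_A \Vdash [0]\neg A$ (any $R_0$-successor $g$ has $g(0) < x_A(0)$, so cannot dominate $x_A$ and hence cannot force $A$). This already proves the \emph{moreover} clause: $A \wedge [0]\neg A$ defines the point $x_A$, which lies on ${\sf MA}$.

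For the first clause I only need surjectivity of $A \mapsto x_A$ onto ${\sf MA}$. A main-axis point $x$ is determined by $x(0)$ via $x(\xi) = \ell^\xi(x(0))$, so given $x \in {\sf MA}$ I choose a worm $A$ with $o(A) = x(0)$ --- possible because $o$ maps onto the ordinals (worms modulo $\equiv_\WC$ are order-isomorphic to the ordinals, cf.\ \cite{FernandezDuqueJJJ2014}) --- and then $x_A = x$. Applying the core argument gives $\mathcal I, y \Vdash A \wedge [0]\neg A$ iff $y = x$, as required.

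The main obstacle is the domination characterisation itself, which packages the soundness and completeness of $\mathcal I$ for $\glp^0$ restricted to worms and is the genuinely model-theoretic part; everything downstream is coordinate bookkeeping. The one place this bookkeeping needs care is the limit stage of the transfinite induction, where the $\ell$-sequence bound $f(\zeta) \leq \ell^{-\xi+\zeta}(f(\xi))$ holds only for $\xi$ \emph{large enough} --- but since the induction hypothesis already pins down all smaller coordinates of $y$, choosing such a large $\xi < \zeta$ is harmless, and the main-axis identity for $x_A$ supplies the matching upper bound.
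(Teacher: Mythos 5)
The paper does not actually prove this lemma: it defers entirely to \cite{FernandezDuqueJJJ2013}, so there is no in-paper argument to match yours against. Your reconstruction is correct and follows the route that reference takes: everything is made to hinge on the domination characterisation of worm satisfaction in $\mathcal I$, after which the biconditional is coordinate bookkeeping, and your bookkeeping (forcing $y(0)=x_A(0)$ via the would-be $R_0$-successor $x_A$, then propagating equality upward using the $\ell$-sequence bound against the main-axis identity) is sound, including the care at limit stages. What your write-up buys over the paper's bare citation is an explicit reduction showing exactly which external facts are consumed; it is worth being honest that there are three of them, not one. Besides the domination characterisation itself, you also need (i) the identity $o_\xi(A)=\ell^\xi(o(A))$ --- the characterisation in \cite{FernandezDuqueJJJ2013,FernandezDuqueJJJ2014} is stated in terms of the sequence of $\xi$-orders of $A$, and identifying that sequence with $\xi\mapsto\ell^\xi(o(A))$, hence with a main-axis point, is a separate theorem of those papers; (ii) surjectivity of $o$ onto the ordinals, which you correctly flag for the first clause and which is again nontrivial; and implicitly (iii) well-foundedness of $<_0^{\RC}$ to define $o$ at all, whereas the present paper only invokes irreflexivity. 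All three are genuinely in the cited literature, so none of this is a gap in correctness, but a reader should not come away thinking the lemma rests on a single imported fact.
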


\section{On universal models for \texorpdfstring{$\WC$}{WC}}

The current proof of the main result of this section does not hold for any universal model but only for models that satisfy an additional natural condition.
Let us recall that in the context of provability logics, a model is called \emph{Euclidean} whenever $xR_\alpha y$ and $xR_\beta z$ imply $y R_\beta z$ for $\beta < \alpha$. Furthermore, by ${\sf Th}(x)$ we denote the collection of worms $\{ A \mid x\Vdash A \}$. Now we are able to state the main result.

\begin{theorem}
Let $\mathcal U$ be a Euclidean universal model for $\WC$. We have that for each point $x\in \mathcal I$ with $x\in {\sf MA}$, there is some $y\in \mathcal U$ such that ${\sf Th}(x)={\sf Th}(y)$.
\end{theorem}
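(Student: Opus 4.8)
The plan is to match each main-axis point $x$ with a canonical $R_0$-maximal worm point of $\mathcal U$ carrying the same worm-theory. By Lemma \ref{lemma:mainAxisElementsAndWorms}, every $x \in {\sf MA}$ is the unique point of $\mathcal I$ satisfying $A \wedge [0]\neg A$ for some worm $A$; in particular $x \Vdash A$ and no $R_0$-successor of $x$ satisfies $A$. I would therefore factor the theorem through the following model-independent statement: in any Euclidean model $\mathcal M$ that is sound and complete for $\WC$, every point $p$ with $p \Vdash A \wedge [0]\neg A$ satisfies ${\sf Th}(p) = {\downarrow}A := \{B \in \Worms \mid A \vdashw B\}$. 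Granting this, applying it to $\mathcal M = \mathcal I$ (which is readily seen to be Euclidean and which, by Ignatiev's theorem together with conservativity, Theorem \ref{thm:RCimpliesWC}, is complete for worm-sequents) yields ${\sf Th}(x) = {\downarrow}A$, while applying it to $\mathcal M = \mathcal U$ yields a point $y$ with ${\sf Th}(y) = {\downarrow}A$; hence ${\sf Th}(x) = {\sf Th}(y)$. Such a $y$ exists because $<_0$ is irreflexive, so $A \nvdash_{\WC} 0A$, and completeness then forces a point satisfying $A$ but not $0A$, which is exactly $A \wedge [0]\neg A$.

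For the displayed statement, the inclusion ${\downarrow}A \subseteq {\sf Th}(p)$ is immediate from $p \Vdash A$ and soundness. The substance is the converse. Given a worm $B$ with $p \Vdash B$, I would first pass to the worm $D$ with $A \wedge B \equiv_\RC D$ provided by Lemma \ref{theorem:eachClosedFormulaIsAworm}, and argue that $p \Vdash D$. Then, since $D \vdashw A$ and $D \vdashw B$ (as $D \equiv A \wedge B$), I rule out two of the three trichotomy alternatives for $A$ and $D$ in $\Worms_0$: the case $A <_0 D$ is impossible because it gives $D \vdashw 0A$, whence $p \Vdash 0A$ by soundness, contradicting $p \Vdash [0]\neg A$; and the case $D <_0 A$ is impossible because it gives $A \vdashw 0D$, while $D \vdashw A$ gives $0D \vdashw 0A$ by \ref{Necessitation}, so that $A \vdashw 0A$, contradicting the irreflexivity of $<_0$. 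By Trichotomy (Lemma \ref{lemma:trichotomy}) the only remaining option is $A \equiv_\WC D$, and then $A \vdashw B$ follows from $D \vdashw B$. Thus $B \in {\downarrow}A$.

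The one step that is not routine, and the place where I expect all the difficulty to sit, is the claim that $p \Vdash D$, i.e. that the conjunction $A \wedge B$ of worms is realized at $p$ by the single worm $D$ to which it is $\RC$-equivalent. This amounts to showing that the Euclidean $\WC$-model $\mathcal M$ soundly validates the reduction of conjunctions of worms to worms, and it is exactly here that the Euclidean hypothesis is indispensable: the defining condition ``$x R_\alpha y$ and $x R_\beta z$ imply $y R_\beta z$ for $\beta < \alpha$'' is precisely the frame condition for Axiom \ref{RC_Axiom5}, the axiom Ignatiev uses to merge conjunctions into worms. I would therefore prove, by induction on the structure of a closed $\RC$-formula $\chi$, that every Euclidean model of $\WC$ validates the equivalence of $\chi$ with its associated worm at every point, the crucial merging step $\langle\alpha\rangle\varphi \wedge \langle\beta\rangle\psi \equiv \langle\alpha\rangle(\varphi \wedge \langle\beta\rangle\psi)$ (for $\alpha > \beta$) being supplied in one direction by Euclideanity and in the other by the worm-level monotonicity and transitivity already validated by $\mathcal M$. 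Once this semantic upgrade from $\WC$ to $\RCzero$ is in place, the trichotomy computation above closes the argument, and care should only be needed to confirm that the maximality witness really is the point pinned by Lemma \ref{lemma:mainAxisElementsAndWorms}.
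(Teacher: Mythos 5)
Your proposal is correct and shares the paper's two central ingredients --- the witness $y$ obtained from $A \nvdash_{\WC} \la 0 \ra A$ together with universality, and a trichotomy computation applied to a worm $\RC$-equivalent to $A \wedge B$, which in turn requires the conjunction-merging lemma for Euclidean $\WC$-models --- but it reorganizes the argument in a genuinely different way for the inclusion ${\sf Th}(x)\subseteq{\sf Th}(y)$. The paper proves that inclusion by a detour through $\glp$: from $x\Vdash B$ it extracts $\glp \vdash A\wedge[0]\neg A \to B$ via completeness of $\mathcal I$, then strips the conjunct $[0]\neg A$ using the admissible rule of Lemma~\ref{lemma:admissibleWormRule}, whose proof rests on an order-theoretic characterization (via the relation $\succeq$) of the points of $\mathcal I$ satisfying a given worm. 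You instead prove the single symmetric statement that \emph{any} point $p$ with $p\Vdash A\wedge[0]\neg A$ in \emph{any} Euclidean universal model has ${\sf Th}(p)=\{B \mid A\vdashw B\}$, and apply it to both $x\in\mathcal I$ and $y\in\mathcal U$; this is legitimate because $\mathcal I$ is indeed Euclidean (immediate from the definition of $R_\xi$) and validates all of $\glp^0$, so the merging of $A\wedge B$ into the worm $D$ is automatic there. Your route buys a more uniform argument that dispenses entirely with Lemma~\ref{lemma:admissibleWormRule} and the $\succeq$-characterization preceding it; what it does not buy is any shortcut on the one genuinely hard step, namely $p\Vdash D$ for $D\equiv_\RC A\wedge B$ in an arbitrary Euclidean universal model, which you correctly isolate. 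One caution there: a naive induction ``on the structure of $\chi$'' does not terminate, because in the merging step $\la\alpha\ra\varphi\wedge\la\beta\ra\psi \equiv \la\alpha\ra(\varphi\wedge\la\beta\ra\psi)$ the right-hand side is not structurally smaller; the paper's Lemma~\ref{lemma:WCmodelHaveSomeSortOfConjunction} instead inducts on the number of distinct modalities occurring in $AB$, using the head/body decomposition and the comparability of $\alpha b_\alpha(A)$ and $\alpha b_\alpha(B)$, with Lemma~\ref{lemma:semanticalBigWormsEatsSmall} supplying exactly the Euclidean merging step you describe. With that measure substituted for structural induction, your proof goes through.
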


\begin{proof}
Let $x \in \sf MA$ be arbitrary and let $A$ be the worm given by Lemma \ref{lemma:mainAxisElementsAndWorms} such that $A\wedge [0] \neg A$ is true at $x$ and nowhere else. Since $A \nvdash_{\WC} 0A$, we can find $y\in \mathcal U$ with $\mathcal{U}, y\Vdash A$ and $\mathcal{U}, y\nVdash 0A$. We shall show that for this particular choice of $y$ we have ${\sf Th}(x)={\sf Th}(y)$.

First, we assume that $\mathcal{I}, x\Vdash B$ for some worm $B$. By the definability of $x$ and the completeness of $\mathcal I$, we know that $\glp \vdash A\wedge [0] \neg A  \to B$. By Lemma~\ref{lemma:admissibleWormRule}, which we prove below, and the conservativity of $\glp$ over $\WC$, we may conclude that actually $A\vdashw B$, and hence $\mathcal{U}, y\Vdash B$.

Now assume that $\mathcal{I}, x\nVdash B$ for some worm $B$. Then $\glp\nvdash A \to B$, which means that $A \nvdash_{\WC} B$. Let $C$ be a worm equivalent to $A\wedge B$. Clearly, by the trichotomy of $<_0$ and since $A\nvdash_{\WC} B$ and $A \nvdash_{\RC} \la 0 \ra (A \land B)$, we have that $A<_0 C$, whence $C\vdashw 0A$. We assume for a contradiction that $\mathcal{U}, y\Vdash B$. In that case, since also $\mathcal{U}, y\Vdash A$, we may conclude by Lemma \ref{lemma:WCmodelHaveSomeSortOfConjunction} below that $\mathcal{U}, y\Vdash C$. But since $C\vdashw 0A$, this would mean that $\mathcal{U}, y\Vdash 0A$ which is a contradiction by our choice of $y$.
\end{proof}

We finish the paper by proving the two critical lemmas that were needed in the above proof, together with some auxiliary observations. First we define a relation $y\succeq x$ on $\mathcal I$ as $y$ being point-wise at least $x$, that is, $y \succeq x$ if and only if for all $\xi$ we have $y_\xi \geq x_\xi$. The following lemma tells us that this relation, together with the point $x$ where a worm $A$ is true for the first time, characterizes all the points where $A$ holds.

\begin{lemma}
Let $A$ be a worm and $x,y \in \mathcal I$. We have that
	\begin{align*}
		x\Vdash A
		& \implies
		\Big(
			y \succeq x \implies y \Vdash A
		\Big);\\		
		x\Vdash A \land [0] \neg A
		& \implies
		\Big(
			y \succeq x \iff y \Vdash A
		\Big). 		
	\end{align*}
\end{lemma}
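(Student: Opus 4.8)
The plan is to reduce both implications to two closure properties of $\mathcal I$ and then invoke the uniqueness clause of Lemma~\ref{lemma:mainAxisElementsAndWorms}. First I would record that the pointwise minimum $z := \min(x,y)$, defined by $z(\zeta) := \min(x(\zeta), y(\zeta))$, of two $\ell$-sequences is again an $\ell$-sequence. This follows from the monotonicity of the hyper-logarithms: for $\xi<\zeta$ large enough for both $x$ and $y$ we have $z(\zeta) \leq x(\zeta) \leq \ell^{-\xi+\zeta}(x(\xi))$ and likewise with $y$, so $z(\zeta)$ lies below $\min\big(\ell^{-\xi+\zeta}(x(\xi)), \ell^{-\xi+\zeta}(y(\xi))\big) = \ell^{-\xi+\zeta}(z(\xi))$. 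Second, and this is the engine of the argument, I would prove by induction on the worm $A$ that worm satisfaction is closed under binary minimum: if $x \Vdash A$ and $y \Vdash A$ then $z \Vdash A$. The only nontrivial case is $A = \la\alpha\ra A'$, where from witnesses $x R_\alpha x'$, $y R_\alpha y'$ with $x', y' \Vdash A'$ one takes $w := \min(x', y')$; the induction hypothesis gives $w \Vdash A'$, the points agree with $z$ below $\alpha$, and at $\alpha$ one uses that $x'(\alpha) < x(\alpha)$ and $y'(\alpha) < y(\alpha)$ force $\min(x'(\alpha), y'(\alpha)) < \min(x(\alpha), y(\alpha))$, so $z R_\alpha w$ and hence $z \Vdash \la\alpha\ra A'$.

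For the first implication (monotonicity of satisfaction along $\succeq$) I would argue directly by induction on $A$. For $A = \la\alpha\ra A'$, take a witness $x R_\alpha x'$ with $x' \Vdash A'$ and define $y'$ to agree with $y$ below $\alpha$ and with $x'$ from $\alpha$ onwards. Then $y' \succeq x'$, one has $y R_\alpha y'$ (the two agree below $\alpha$, and $y'(\alpha) = x'(\alpha) < x(\alpha) \leq y(\alpha)$), and a routine check using the monotonicity of the $\ell^\gamma$ together with $y \succeq x$ confirms that $y'$ is a genuine $\ell$-sequence; the induction hypothesis then yields $y' \Vdash A'$ and hence $y \Vdash A$.

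The second implication then falls out. Its forward direction is exactly the first implication applied to $x \Vdash A$. For the converse, assume $y \Vdash A$ and set $z := \min(x,y)$. The key preliminary observation is that $w \Vdash [0]\neg A$ depends only on $w(0)$: it holds precisely when $w(0) \leq \mu$, where $\mu := \min\{f(0) : f \Vdash A\}$, because $w R_0 v$ means simply $v(0) < w(0)$. Since $x \Vdash A \wedge [0]\neg A$ we get $x(0) = \mu$, so in particular $y(0) \geq \mu = x(0)$ and therefore $z(0) = x(0)$. By the binary min-closure we have $z \Vdash A$, and since $z(0) = \mu$ we also have $z \Vdash [0]\neg A$; thus $z \Vdash A \wedge [0]\neg A$. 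By the uniqueness asserted in Lemma~\ref{lemma:mainAxisElementsAndWorms} this forces $z = x$, i.e.\ $\min(x,y) = x$, which is precisely $y \succeq x$. I expect the main obstacle to be propagating minimality from coordinate $0$ to all coordinates: the $\ell$-sequence inequalities only bound coordinates from above, so a naive coordinatewise induction on lower bounds fails. It is exactly the binary min-closure of worm satisfaction that circumvents this, by collapsing the whole question to the single coordinate $0$ controlled by $[0]\neg A$ together with the uniqueness of the defining main-axis point.
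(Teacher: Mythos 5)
Your proposal is correct, but it reaches the hard direction by a genuinely different route than the paper. For the first implication both arguments are essentially the same: an induction on $A$ in which the witness $x'$ for $\la\alpha\ra A'$ is spliced with $y$ below $\alpha$ (the paper defers this to Icard's thesis; you carry it out explicitly, and your splice is exactly the paper's $\alpha(f,g)$ operation). For the converse of the second implication, the paper takes the \emph{first} coordinate $\xi$ where $y_\xi < x_\xi$, proves a ``mixing'' claim (if $f\Vdash B$, $g\Vdash B$ and $f_\alpha > g_\alpha$, then the splice $\alpha(f,g)$ satisfies $B$), deduces $x R_\xi\, \xi(x,y)\Vdash A$, hence $x\Vdash\la 0\ra A$, contradicting $[0]\neg A$ directly. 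You instead prove that worm satisfaction is closed under pointwise minimum, observe that $[0]\neg A$ pins $x(0)$ to the minimal $0$-th coordinate among points satisfying $A$, conclude $\min(x,y)\Vdash A\wedge[0]\neg A$, and invoke the uniqueness clause of Lemma~\ref{lemma:mainAxisElementsAndWorms} to force $\min(x,y)=x$. Both are sound; your min-closure lemma is a clean structural fact and the reduction to coordinate $0$ is elegant, but it buys this at the cost of an extra dependency on the externally cited uniqueness of the point defined by $A\wedge[0]\neg A$, whereas the paper's argument needs only the model's own accessibility relations and monotonicity of the modalities. Two small remarks: the closure of $\ell$-sequences under pointwise minimum does not actually require monotonicity of the hyper-logarithms (for each $\xi$ just use the inequality of whichever of $x,y$ attains $\min(x(\xi),y(\xi))$), though the weak monotonicity you invoke is a true property established in the cited literature; and your min-closure induction step is verified correctly, since $x'(\alpha)<x(\alpha)$ and $y'(\alpha)<y(\alpha)$ indeed give $\min(x'(\alpha),y'(\alpha))<\min(x(\alpha),y(\alpha))$.
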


\begin{proof}
During this proof we make use of a specific operation on $\ell$-sequences: for an ordinal $\alpha$ and $\ell$-sequences $f$ and $g$, we define $\alpha(f, g)$ to be the ordinal sequence such that $\alpha(f, g)_\zeta = f_\zeta$ for $\zeta<\alpha$ and $\alpha(f, g)_\zeta = g_\zeta$ for $\zeta\geq\alpha$. Clearly, whenever $g_\alpha \leq f_\alpha$, we have that $\alpha(f, g)$ is again an $\ell$-sequence.

The first item is proven by an easy induction on $A$. It was already observed as Lemma 2.4.3.~of \cite{Icard2008}. Note also that the $\Longrightarrow$ direction of the second item follows from the first one.

For the $\Longleftarrow$ direction of the second item, we fix $x$ such that $x\Vdash A \wedge [0] \neg A$, consider $y$ such that $y\Vdash A$, and assume for a contradiction that $y \not\succeq x$. Let $\xi$ be the smallest ordinal such that $y_\xi < x_\xi$. Then it is easy to see that $xR_\xi \xi(x,y)$. 

We will set out to prove the following {\bf claim}: for any worm $B$ and any $\ell$-sequences $f$ and $g$ with $f_\alpha > g_\alpha$ we have that if $f\Vdash B$ and $g\Vdash B$, then $\alpha(f, g)\Vdash B$.

Clearly the result follows from the claim, as it would imply that $\xi(x,y)\Vdash A$, and hence that $x\Vdash \la \xi \ra A$. Consequently, also $x\Vdash \la 0 \ra A$, which contradicts the assumption that $x\Vdash [0] \neg A$.

We prove the claim by induction on $B$ with the base case being trivial. Thus we consider the inductive case where $B = \la \zeta \ra C$ assuming $f\Vdash \la \zeta \ra C$ and $g\Vdash \la \zeta \ra C$.

In the case where $\zeta<\alpha$, we see that for any $w$ such that $fR_\zeta w$, we also have $\alpha(f, g) R_\zeta w$, which tells us that $\alpha(f, g)\Vdash \la \zeta \ra C$.

In the case where $\zeta\geq\alpha$, we find $w$ and $w'$ such that $fR_\zeta w\Vdash C$ and $gR_\zeta w'\Vdash C$. By the induction hypothesis, we know that $\alpha(w,w')\Vdash C$. But since $\alpha(f, g)R_\zeta \alpha(w,w')$, we see that $\alpha(f, g)\Vdash \la \zeta \ra C$ as was to be shown.
\end{proof}

With this characterization lemma, we can easily prove the following admissible rule.

\begin{lemma}\label{lemma:admissibleWormRule}
Let $A$ and $B$ be worms such that $\glp \vdash A \wedge [0] \neg A \to B$. Then we have that $\glp \vdash A \to B$.
\end{lemma}
 
\begin{proof}
Given $A$, let $x$ be the unique $\ell$-sequence where $A \wedge [0] \neg A$ holds. Clearly, since $\glp \vdash A\wedge [0]\neg A \to B$, we also have that $x\Vdash B$. We prove that for any $y$, if $y\Vdash A$, then $y\Vdash B$, from which the result follows by completeness. By the previous lemma, if $y\Vdash A$, then $y\succeq x$. But then, using the previous lemma again, we may conclude that $y\Vdash B$.
\end{proof}

The next two lemmas relate to conjunctions and models of $\WC$.

\begin{lemma}\label{lemma:semanticalBigWormsEatsSmall}
Let $\mathcal{U}$ be a Euclidean universal model for $\WC$ and $x \in \mathcal U$. Then, if $x\Vdash A$ with $A\in \Worms_{\alpha +1}$ and $x\Vdash \alpha  B$, it also holds that $x \Vdash A  \alpha  B$.
\end{lemma}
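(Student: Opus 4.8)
The plan is to prove this by induction on the length of $A$, using the Euclidean frame condition to transport a single $\alpha$-successor (witnessing $\alpha B$) forward along the path that witnesses $A$. First I would unwind the semantics. Writing $A = \la a_1 \ra \cdots \la a_n \ra$ with each $a_i \geq \alpha + 1$ (this is exactly what $A \in \Worms_{\alpha+1}$ buys us), the hypothesis $x \Vdash A$ furnishes a path $x R_{a_1} x_1 R_{a_2} \cdots R_{a_n} x_n$, while $x \Vdash \alpha B$ furnishes some $z$ with $x R_\alpha z$ and $z \Vdash B$. The goal $x \Vdash A \alpha B$ amounts to producing a path $x R_{a_1} \cdots R_{a_n} x_n R_\alpha w$ with $w \Vdash B$, so it suffices to show that the \emph{same} $z$ is an $R_\alpha$-successor of the endpoint $x_n$.

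The key step is that the Euclidean condition lets me push $z$ forward one edge at a time. Since $a_1 \geq \alpha + 1 > \alpha$, from $x R_{a_1} x_1$ and $x R_\alpha z$ I obtain $x_1 R_\alpha z$; repeating this along the path (each $a_i > \alpha$) yields $x_n R_\alpha z$, and concatenating the $A$-path with this final edge gives the desired witness. Formally I would phrase this as induction on $\len{A}$: the base case $A = \top$ is immediate, since then $A \alpha B = \alpha B$ and the conclusion is just the hypothesis $x \Vdash \alpha B$. In the inductive step $A = \la a \ra A'$ with $a \geq \alpha + 1$ and $A' \in \Worms_{\alpha+1}$, I take the first edge $x R_a x'$ with $x' \Vdash A'$, apply the Euclidean property to $x R_a x'$ and $x R_\alpha z$ (using $\alpha < a$) to deduce $x' R_\alpha z$ and hence $x' \Vdash \alpha B$, invoke the induction hypothesis at $x'$ (which forces both $A'$ and $\alpha B$) to get $x' \Vdash A' \alpha B$, and finally prefix the edge $x R_a x'$ to conclude $x \Vdash \la a \ra A' \alpha B = A \alpha B$.

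There is no genuine obstacle here; the only point to check is that $A \in \Worms_{\alpha+1}$ forces every modality of $A$ to strictly exceed $\alpha$, which is precisely the side condition $\beta < \alpha$ demanded by the Euclidean property when transporting the $\alpha$-successor. I would also note in passing that neither universality nor soundness of $\mathcal{U}$ is used: the argument relies solely on the Euclidean frame condition and the standard Kripke clauses for the diamonds.
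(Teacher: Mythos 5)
Your proof is correct and is essentially the same as the paper's: both argue by induction on $\len{A}$, using the Euclidean condition to transport the $R_\alpha$-successor witnessing $\alpha B$ across the first edge of the path witnessing $A$ (which is legitimate since every modality of $A$ exceeds $\alpha$), and then applying the induction hypothesis. Your closing remark that only the Euclidean frame condition and the Kripke clauses are used, not universality, is also accurate.
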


\begin{proof}
By induction on the length of $A$ with the base case being trivial. For the inductive case, suppose that $x\Vdash \gamma A$ for some $\gamma > \alpha$, and that $x\Vdash \alpha B$.
Then there is $y \in \mathcal{U}$ such that $xR_\gamma y$ and $y\Vdash A$. Likewise, there is $z \in \mathcal{U}$ such that $xR_\alpha z$ and $z\Vdash B$.
Since $\mathcal{U}$ is Euclidean, we know that $y R_\alpha z$, and hence by induction hypothesis that $y \Vdash A \alpha B$. Then clearly $x \Vdash \gamma A \alpha B$.
\end{proof}

With this lemma at hand we can show that although a Euclidean $\WC$-model $\mathcal U$ cannot speak directly about conjunctions, it can indirectly do so.

\begin{lemma}\label{lemma:WCmodelHaveSomeSortOfConjunction}
Let $\mathcal U$ be a Euclidean universal model for $\WC$ and $x\in \mathcal U$. Let $A, B$ and $C$ be worms such that $A \land B \equiv_\RC C$. Then we have $x \Vdash A$ and $x \Vdash B$ if and only if $x\Vdash C$.
\end{lemma}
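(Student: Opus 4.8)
The plan is to prove the two directions separately, with right-to-left being routine and left-to-right carrying all the weight. For right-to-left, suppose $x \Vdash C$. Since $A \land B \equiv_\RC C$ gives the worm-implications $C \vdashr A$ and $C \vdashr B$, conservativity (Theorem \ref{thm:RCimpliesWC}) yields $C \vdashw A$ and $C \vdashw B$, and soundness of the $\WC$-model $\mathcal U$ converts these into $x \Vdash A$ and $x \Vdash B$. I would also record once and for all that any two worms which are $\RC$-equivalent are $\WC$-equivalent, hence true at exactly the same points of $\mathcal U$; so for the converse it suffices to exhibit \emph{some} worm $\RC$-equivalent to $A \land B$ that holds at $x$.

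For left-to-right I would argue by complete induction on $\len{A} + \len{B}$, assuming $x \Vdash A$ and $x \Vdash B$. The base case is $A = \top$ or $B = \top$, where $A \land B$ is $\RC$-equivalent to the surviving conjunct, which holds at $x$ by hypothesis. For the inductive step, let $\alpha$ be the least modality occurring in $A$ or $B$, so that $A, B \in \Worms_\alpha$ are non-trivial. Using the ($\RC$-provable) decomposition $A \equiv_\RC h_{\alpha+1}(A) \land \la\alpha\ra b_\alpha(A)$ and likewise for $B$, I would regroup
\[ A \land B \equiv_\RC \big(h_{\alpha+1}(A) \land h_{\alpha+1}(B)\big) \land \big(\la\alpha\ra b_\alpha(A) \land \la\alpha\ra b_\alpha(B)\big). \]
The two heads lie in $\Worms_{\alpha+1}$ and are strictly shorter (the first $\la\alpha\ra$ and the body of at least one worm are discarded, since $\alpha$ occurs in $A$ or $B$), so the induction hypothesis applies: from $x \Vdash h_{\alpha+1}(A)$ and $x \Vdash h_{\alpha+1}(B)$ — which follow from $x \Vdash A$ and $x \Vdash B$ by Lemma \ref{lemma:HeadAndRemainder}.\ref{item:AndIntroL} and soundness — I obtain a worm $H \in \Worms_{\alpha+1}$ with $H \equiv_\RC h_{\alpha+1}(A) \land h_{\alpha+1}(B)$ and $x \Vdash H$.

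For the two body-diamonds, Lemma \ref{lemma:body}.\ref{item:body-bALTNA} together with soundness give both $x \Vdash \la\alpha\ra b_\alpha(A)$ and $x \Vdash \la\alpha\ra b_\alpha(B)$, while trichotomy of $<_\alpha$ on $\Worms_\alpha$ (Lemma \ref{lemma:trichotomy}) lets me pick the $<_\alpha$-larger body $m \in \{b_\alpha(A), b_\alpha(B)\}$; then the larger diamond absorbs the smaller via \ref{Necessitation} and \ref{Transitivity}, so $\la\alpha\ra b_\alpha(A) \land \la\alpha\ra b_\alpha(B) \equiv_\RC \la\alpha\ra m$, and crucially $x \Vdash \la\alpha\ra m$ since it is one of the two already-forced diamonds. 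The decisive step is then to invoke the semantic ``big worms eat small worms'' Lemma \ref{lemma:semanticalBigWormsEatsSmall} — the sole place where the Euclidean hypothesis on $\mathcal U$ enters: from $H \in \Worms_{\alpha+1}$, $x \Vdash H$ and $x \Vdash \la\alpha\ra m$ it yields $x \Vdash H\la\alpha\ra m$. Finally $H\la\alpha\ra m \equiv_\RC H \land \la\alpha\ra m \equiv_\RC A \land B \equiv_\RC C$, so $x \Vdash C$, completing the induction.

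I expect the main obstacle to be exactly this combination step. Because neither $\WC$ nor the model $\mathcal U$ can talk about conjunctions, the syntactic $\RC$-reduction of $A \land B$ to a single worm cannot be imported wholesale; it must be reconstructed one modal layer at a time inside $\mathcal U$, and the recursion only closes because the head/body decomposition strictly shortens the worms while Lemma \ref{lemma:semanticalBigWormsEatsSmall} re-assembles a head in $\Worms_{\alpha+1}$ with an $\la\alpha\ra$-body into a genuine worm. The delicate points to verify are that $\alpha$ being the \emph{joint} minimum guarantees $\len{h_{\alpha+1}(A)} + \len{h_{\alpha+1}(B)} < \len{A} + \len{B}$, and that the absorbed diamond $\la\alpha\ra m$ is always one of the two body-diamonds already known to hold at $x$ (with the degenerate cases where a body equals $\top$ handled uniformly by trichotomy).
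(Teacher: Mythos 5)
Your proof is correct and follows essentially the same route as the paper's: decompose $A$ and $B$ into $(\alpha+1)$-heads and $\alpha$-bodies, recurse on the conjunction of the heads, compare the two body-diamonds, and reassemble with Lemma \ref{lemma:semanticalBigWormsEatsSmall}. The only (immaterial) deviations are that you induct on $\len{A}+\len{B}$ rather than on the width of $AB$, and you derive the comparability of $\la\alpha\ra b_\alpha(A)$ and $\la\alpha\ra b_\alpha(B)$ from the trichotomy Lemma \ref{lemma:trichotomy} instead of citing Corollary 4.12 of \cite{BeklemishevFernandezDuqueJJJ2014} --- both arguments rest on the same closure of worms under conjunction.
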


\begin{proof}
The right-to-left direction is trivial. The left-to-right direction follows by induction on the number of different symbols of $AB$ (the \emph{width} of $AB$) following the standard proof that worms are closed under conjunctions (Lemma 9 of \cite{Beklemishev2005} and Corollary 4.13 of \cite{BeklemishevFernandezDuqueJJJ2014}). The base case is trivial. For the induction step, let $\alpha$ be the minimal modality of $AB$. We know by Lemmas \ref{lemma:HeadAndRemainder} and \ref{lemma:body} that $x\Vdash h_{\alpha+1}(A)$, $x\Vdash \alpha b_\alpha(A)$, $x\Vdash h_{\alpha+1}(B)$, and $x\Vdash \alpha b_\alpha(B)$. Let $D$ be provably equivalent to $h_{\alpha+1}(A) \land h_{\alpha+1}(B)$. Then, since there is no $\alpha$ in the $(\alpha+1)$-heads of $A$ and $B$, we know that $x\Vdash D$ by the induction hypothesis.
By Corollary 4.12 of \cite{BeklemishevFernandezDuqueJJJ2014}, we know that either $\alpha b_\alpha(A) \vdash \alpha b_\alpha(B)$ or $\alpha b_\alpha(B) \vdash \alpha b_\alpha(A)$. Let $\alpha E$ be the maximum. We obtain $x\Vdash D\alpha E$ by Lemma \ref{lemma:semanticalBigWormsEatsSmall}, and clearly $D\alpha E \equiv_\RC A\wedge B$.
\end{proof}

We conclude by observing that all the results proven about universal (Euclidean) models of $\WC$ also hold for universal (Euclidean) models of $\RCzero$. It remains to see whether we can prove the same results for non-Euclidean models.


\begin{thebibliography}{20}
\expandafter\ifx\csname natexlab\endcsname\relax\def\natexlab#1{#1}\fi
\expandafter\ifx\csname url\endcsname\relax
  \def\url#1{{\tt #1}}\fi
\expandafter\ifx\csname urlprefix\endcsname\relax\def\urlprefix{URL }\fi

\bibitem[{Artale et~al.(2009)Artale, Calvanese, Kontchakov, \&
  Zakharyaschev}]{Artale2009}
Artale, A., Calvanese, D., Kontchakov, R., \& Zakharyaschev, M. (2009).
\newblock The {DL-Lite} family and relations.
\newblock {\em Journal of Artificial Intelligence Research\/}, {\em 36\/},
  1--69.

\bibitem[{Beklemishev(2004)}]{Beklemishev2004}
Beklemishev, L.~D. (2004).
\newblock Provability algebras and proof-theoretic ordinals, {I}.
\newblock {\em Annals of Pure and Applied Logic\/}, {\em 128\/}, 103--124.

\bibitem[{Beklemishev(2005)}]{Beklemishev2005}
Beklemishev, L.~D. (2005).
\newblock {V}eblen hierarchy in the context of provability algebras.
\newblock In {\em Logic, Methodology and Philosophy of Science, Proceedings of
  the Twelfth International Congress\/}, (pp. 65--78). Kings College
  Publications.

\bibitem[{Beklemishev(2006)}]{Beklemishev2006_WormPrinciple}
Beklemishev, L.~D. (2006).
\newblock The worm principle.
\newblock In Z.~Chatzidakis, P.~Koepke, \& W.~Pohlers (Eds.) {\em Logic
  Colloquium 2002, Lecture Notes in Logic 27\/}, (pp. 75--95). ASL
  Publications.

\bibitem[{Beklemishev(2012)}]{Beklemishev2012}
Beklemishev, L.~D. (2012).
\newblock Calibrating provability logic: From modal logic to {R}eflection
  {C}alculus.
\newblock In T.~Bolander, T.~Braüner, T.~S. Ghilardi, \& L.~Moss (Eds.) {\em
  Advances in Modal Logic 9\/}, (pp. 89--94). London: College Publications.

\bibitem[{Beklemishev(2014)}]{Beklemishev2014}
Beklemishev, L.~D. (2014).
\newblock Positive provability logic for uniform reflection principles.
\newblock {\em Annals of Pure and Applied Logic\/}, {\em 165\/}(1), 82--105.

\bibitem[{Beklemishev et~al.(2014)Beklemishev, Fernández-Duque, \&
  Joosten}]{BeklemishevFernandezDuqueJJJ2014}
Beklemishev, L.~D., Fernández-Duque, D., \& Joosten, J.~J. (2014).
\newblock On provability logics with linearly ordered modalities.
\newblock {\em Studia Logica\/}, {\em 102\/}(541).

\bibitem[{Beklemishev \& Gabelaia(2013)}]{BeklemishevGabelaia2013}
Beklemishev, L.~D., \& Gabelaia, D. (2013).
\newblock Topological completeness of the provability logic {GLP}.
\newblock {\em Annals of Pure and Applied Logic\/}, {\em 164\/}(12),
  1201--1223.

\bibitem[{Beklemishev et~al.(2005)Beklemishev, Joosten, \&
  Vervoort}]{BeklemishevJJJVervoort2005}
Beklemishev, L.~D., Joosten, J.~J., \& Vervoort, M. (2005).
\newblock A finitary treatment of the closed fragment of {J}aparidze's
  provability logic.
\newblock {\em Journal of Logic and Computation\/}, {\em 15\/}(4), 447--463.

\bibitem[{Dashkov(2012)}]{Dashkov2012}
Dashkov, E.~V. (2012).
\newblock On the positive fragment of the polymodal provability logic {GLP}.
\newblock {\em Mathematical Notes\/}, {\em 91\/}(3-4), 318--333.

\bibitem[{Fern\'andez-Duque(2017)}]{FernandezDuque2017}
Fern\'andez-Duque, D. (2017).
\newblock Worms and spiders: Reflection calculi and ordinal notation systems.
\newblock {\em {IfCoLoG} Journal of Logics and their Applications\/}, {\em
  4\/}(10), 3277--3356.

\bibitem[{Fern\'andez-Duque \&
  Joosten(2013{\natexlab{a}})}]{FernandezDuqueJJJ2013_hyperations}
Fern\'andez-Duque, D., \& Joosten, J.~J. (2013{\natexlab{a}}).
\newblock Hyperations, {V}eblen progressions and transfinite iteration of
  ordinal functions.
\newblock {\em Annals of Pure and Applied Logic\/}, {\em 164\/}, 785--–801.

\bibitem[{Fern\'andez-Duque \&
  Joosten(2013{\natexlab{b}})}]{FernandezDuqueJJJ2013}
Fern\'andez-Duque, D., \& Joosten, J.~J. (2013{\natexlab{b}}).
\newblock Models of transfinite provability logic.
\newblock {\em The Journal of Symbolic Logic\/}, {\em 78\/}(2), 543--561.

\bibitem[{Fern\'andez-Duque \& Joosten(2014)}]{FernandezDuqueJJJ2014}
Fern\'andez-Duque, D., \& Joosten, J.~J. (2014).
\newblock Well-orders in the transfinite {J}aparidze algebra.
\newblock {\em Logic Journal of the {IGPL}\/}, {\em 22\/}(6), 933--963.

\bibitem[{Icard(2008)}]{Icard2008}
Icard, T. (2008).
\newblock {\em Models of the Polymodal Provability Logic\/}.
\newblock Master's thesis, Universiteit van {A}msterdam.

\bibitem[{Ignatiev(1993)}]{Ignatiev1993}
Ignatiev, K.~N. (1993).
\newblock On strong provability predicates and the associated modal logics.
\newblock {\em The Journal of Symbolic Logic\/}, {\em 58\/}, 249--290.

\bibitem[{Japaridze(1988)}]{Japaridze1988}
Japaridze, G.~K. (1988).
\newblock The polymodal provability logic.
\newblock In {\em Intensional logics and logical structure of theories:
  material from the fourth {S}oviet-{F}innish symposium on logic, {T}elavi, May
  20--24, 1985\/}, (pp. 16--48). Metsniereba.
\newblock (In {R}ussian).

\bibitem[{Joosten(2016)}]{Joosten2016}
Joosten, J.~J. (2016).
\newblock {T}uring-{T}aylor expansions for arithmetic theories.
\newblock {\em Studia Logica\/}, {\em 104\/}(6), 1225--1243.

\bibitem[{Pakhomov(2014)}]{Pakhomov2014}
Pakhomov, F. (2014).
\newblock On the complexity of the closed fragment of {J}aparidze's provability
  logic.
\newblock {\em Archive for Mathematical Logic\/}, {\em 53\/}(7--8), 949--967.

\bibitem[{Shapirovsky(2008)}]{Shapirovsky2008}
Shapirovsky, I. (2008).
\newblock {PSPACE}-decidability of {J}aparidze's polymodal logic.
\newblock In C.~Areces, \& R.~Goldblatt (Eds.) {\em Advances in Modal Logic
  7\/}, (pp. 289--304). College Publications.

\end{thebibliography}

\end{document}